\theoremstyle{plain}
\newtheorem{theorem}{Theorem}[section]
\newtheorem{proposition}[theorem]{Proposition}
\theoremstyle{definition}
\theoremstyle{remark}
\newtheorem{remark}{Remark}
\DeclareRobustCommand{\rchi}{{\mathpalette\irchi\relax}}
\newcommand{\irchi}[2]{\raisebox{\depth}{$#1\chi$}} 
\definecolor{myred}{RGB}{179,0,0}
\newcommand*{\modif}[1]{#1}
\definecolor{mygreen}{RGB}{28,172,0} 
\definecolor{mylilas}{RGB}{170,55,241}
\newcommand{\molarmass}[0]{\mathscr{M}} 
\newcommand{\tauxreaction}[1]{\tau_{#1}} 
\newcommand{\tauxproduction}[1]{\omega_{#1}} 
\newcommand{\species}[1]{\mathcal{E}_{#1}} 
\newcommand{\hstandardmol}[1]{h^0_{#1,mol}}
\newcommand{\hstandardmass}[1]{h^0_{#1}}
\newcommand{\massflux}[0]{\ensuremath m} 
\newcommand{\partialdershort}[2]{ \partial_{#2}{#1} }
\newcommand{\derivshort}[2]{ d_{#2}{#1} }
\newcommand{\partialder}[2]{ \dfrac{\partial {#1}}{\partial {#2}} }
\newcommand{\rhocomplet}[0]{\rho}
\newcommand{\Tcomplet}[0]{T}
\newcommand{\ucomplet}[0]{u}
\newcommand{\xcomplet}[0]{x}
\newcommand{\Ycomplet}[0]{Y}
\newcommand{\rhowave}[0]{\ensuremath\hat{\rho}} 
\newcommand{\Twave}[0]{\ensuremath\hat{T}}
\newcommand{\Ywave}[0]{\ensuremath\hat{Y}}
\newcommand{\uwave}[0]{\ensuremath\hat{u}}
\newcommand{\xwave}[0]{\ensuremath\hat{x}}
\newcommand{\Qgas}[0]{\ensuremath Q} 
\newcommand{\Qgasmol}[0]{\ensuremath Q_{mol}} 
\newcommand{\Qpyro}[0]{\ensuremath Q_{p}} 
\newcommand{\Gi}[0]{{G^{1}}} 
\newcommand{\Gis}[0]{{G^{1}_{(s)}}} 
\newcommand{\Gig}[0]{{G^{1}_{(g)}}} 
\newcommand{\Gii}[0]{{G^{2}}} 
\newcommand{\Giig}[0]{{G^{2}_{(g)}}} 
\newcommand{\Ps}[0]{{P_{(s)}}} 
\newcommand{\nomS}[0]{target interface balance}
\newcommand{\nomDG}[0]{effective interface balance}
\newcommand{\nomXi}[0]{interface balance mismatch}
\newcommand{\referentielRG}[0]{$\mathscr{R}_G$}
\newcommand{\cmin}{c_{min}}
\newcommand{\cmax}{c_{max}}
\newcommand{\Tslim}{T_{s,min}}
\newcommand{\thetamin}{\theta_{s,min}}
\newtheorem{assumption}{H}
\title{Semi-analytical stationary solution for solid propellants}
\author{Laurent Francois}
\newmdenv[
  topline=false,
  bottomline=false,
  skipabove=\topsep,
  skipbelow=\topsep
]{siderules}
\newenvironment{packed_list}{
 \setlist{nolistsep}
 \begin{itemize}[noitemsep]}
 {\end{itemize}}
\begin{document}


\title{Travelling wave mathematical analysis and efficient numerical resolution for a one-dimensional model of solid propellant combustion}

\author{
\name{Laurent François\textsuperscript{1,2}, Joël Dupays\textsuperscript{1}, Dmitry Davidenko\textsuperscript{1}, Marc Massot\textsuperscript{2}}
\affil{\textsuperscript{1}ONERA, DMPE, 6 Chemin de la Vauve aux Granges, 91120 Palaiseau, France\\
       \textsuperscript{2}CMAP, CNRS, Ecole polytechnique, Institut Polytechnique de Paris, Route de Saclay, 91128 Palaiseau Cedex, France
}
}

\maketitle

\begin{abstract}
\noindent
We investigate a model of solid propellant combustion involving surface pyrolysis coupled to finite activation energy gas phase combustion.
Existence and uniqueness of a travelling wave solution are established by extending dynamical system tools classically used for premixed flames,
dealing with the additional difficulty arising from the surface regression and pyrolysis.
An efficient shooting method allows to solve the problem in phase space without resorting to space discretisation nor fixed-point Newton iterations.
The results are compared to solutions from a CFD code developed at ONERA, assessing the efficiency and potential of the method, and the impact of the modelling assumptions
is \modif{evaluated} through parametric studies.

\end{abstract}

\begin{keywords}
solid propellant combustion; nonlinear surface pyrolysis; travelling wave solution; dynamical system; shooting method.
\end{keywords}

\section{Introduction}

Solid propellant combustion is a key element in rocket propulsion and has been extensively studied since the 1950s including at ONERA
\cite{deluca1992,novozhilov1992, lengelle, barrere}. This particular problem involves
a solid phase and a gas phase, separated by an interface (surface of the solid). The solid is heated up by thermal conduction and radiation from the gas phase.
At its surface, the solid propellant is decomposed through a pyrolysis process, and the resulting pyrolysis products are gasified and injected in the gas phase.
All these phenomena will be gathered under the name ``pyrolysis'' for simplicity.
The interface regresses and the injected species react and form a flame which heats back the solid, allowing for a sustained combustion. 
It is essential to understand the physics of this phenomenon to allow for clever combustion chamber designs and efficient solid rocket motors.
A key element is the regression speed of the propellant surface, and its dependence on the combustion chamber conditions.

Many models have been developed, with essentially two levels of description. On the one side, there exists 
analytical models, which directly give a formula for the steady regression speed and allow a qualitative description and global understanding
of the physics at the cost of some restrictive assumptions \cite{williamsDBW, BDPmodel, WSBmodel}.
On the other side, one can find detailed models, that require high-fidelity numerical
resolution with spatial and temporal discretisations (CFD), giving a very detailed representation of the physics, both for quasi-steady and unsteady evolutions. 
The main analytical models in steady regime are the Denison-Baum-Williams (DBW) model \cite{williamsDBW}, the Beckstead-Derr-Price (BDP) model \cite{BDPmodel}
and the Ward-Son-Brewster (WSB) model \cite{WSBmodel}.
They mainly give the regression speed as a function of surface temperature, pressure and initial temperature of the propellant
using a one-dimensional approach.
They usually assume the pyrolysis is concentrated in a narrow region close to the surface and the gas phase only contains two species: one reactant resulting from the pyrolysis, and one product.
There is only one global reaction which transforms the reactant into the product.
The DBW and BDP models assume that the activation energy $E_a$ of the gas phase reaction is very high. This allows the splitting of the gas phase into two separate zones:
the convection-diffusion zone and the reaction-diffusion zone, starting at the flame stand-off distance $x_f$ (model-specific).
The equations can be solved in each zone separately and linked at the interface between the two, yielding an \modif{explicit or implicit} expression for the burning mass flow rate $\massflux$.
On the opposite, the WSB model assumes that $E_a$ is zero, which often leads to better agreement with experimental results \cite{Brewster_simplified_combustion}. 
Assuming a unitary Lewis number, several equations can be derived,
which require simple fixed-point iterations to determine the regression speed.
All these models give 
relations between the propellant physical characteristics and the physical state of the propellant and gas flow
(surface temperature, regression speed). 
They allow for a global understanding of the phenomenon. In all these models, the equations describing the physics of both
phases can only be solved for a unique value of the regression speed $c$, also called the eigenvalue\footnote{\modif{The name eigenvalue is adopted here for two reasons. First, it is historically used in the papers on solid propellant theory as well as in the laminar flame theory, mainly without quotes. Second, we will investigate a non-linear eigenvalue problem for an  elliptic operator with a non-linear source term as well as a non-linear dependency of the regression rate on the surface temperature, and thus a specific case of a general non-linear eigenvalue problem on the whole real line.
It bears some similarity with the eigenvalue problem of a second order linear elliptic operator such as the Laplace operator on a compact interval with proper boundary conditions \cite{schatzman2002}: we look for both an eigenfunction of space related to a real eigenvalue of the operator. Its extension to the non-linear case has also been studied in the literature \cite{henderson1997}.}} of the problem.

At the other end of the spectrum, numerical methods solving a comprehensive set of equations, e.g. \cite{massa_2D,meynet2005simulation, meynet2006}, use much
less restrictive assumptions, but they are computationally expensive and 
may encounter convergence difficulties. Parametric studies are therefore costly.

It would \modif{thus} be interesting to design an easy-to-use and yet precise analytical or semi-analytical model,
which would not require as many assumptions as the existing analytical models, in particular for the gas phase reaction activation energy,
thus remaining closer to the physics, 
amenable to a full theoretical study of existence and uniqueness, setting the mathematical basis of the model, and which can be resolved efficiently
using a specific numerical method.
Such a model already exists for travelling combustion waves in laminar premixed flames and has been studied for quite some time, for example in \cite{livreZeldovich}.
It is based on a phase-plane representation of a simplified combustion problem with unitary Lewis number, two species and a single reaction;
the existence and uniqueness of a travelling wave profile can be proved relying on dynamical system theory.
The combustion wave speed is shown to be a key parameter for which only one value allows the simplified problem to be solved. 
This value can be determined numerically
through a shooting method, for any value of the activation energy of the gas phase reaction. 

In this paper, we investigate a specific model of solid propellant combustion. It 
involves pyrolysis in an infinitely thin zone at the interface coupled to solid regression and homogeneous gas phase combustion described
by one global reaction with finite activation energy, heat and molecular diffusion at unitary Lewis number. The first contribution compared to existing analytical models
is the relaxation of the \modif{assumption on} 
activation energy, which is not any more considered to be either zero or very large.
The model is derived from a detailed system of equations that describes 
the evolution of the temperature of the solid propellant, the evolution of the gas phase, and the pyrolysis of the propellant.  
The model takes into account thermal expansion and density changes in the gas phase.
We study the existence and uniqueness of a travelling wave solution of this system, that is we look for a temperature profile and a wave velocity $c$,
the so-called eigenvalue or regression \modif{velocity}.

In the case of a solid propellant, Verri \cite{Verri} presented a demonstration of the uniqueness of the travelling wave solution and its stability using a
different approach, without modelling the gas phase, but only considering the gas heat feedback as a function of the regression speed with specific mathematical properties.
This means the gas heat feedback was assumed to have a unique value for a given regression speed,
which tacitly means that the gas phase temperature profile was also assumed unique, although this was not investigated in his paper.
His framework was also more restrictive, as only exothermic surface reactions were considered, or weakly endothermic ones. No numerical method was
developed to determine the solution profile and regression speed.
Johnson and Nachbar \cite{1963JohnsonNachbarUnicity} have also analysed the mathematical behaviour and uniqueness of the eigenvalue for the burning of a monopropellant,
but they assumed that the surface temperature is a given constant. They showed that for any reasonable value of this temperature, a single regression speed exists such that
the complete problem is solved.
In the present paper however, we aim at proving the existence and uniqueness of the solution for a variable surface temperature determined from the regression speed via a 
pyrolysis law, with proper representation of the gas phase, and a non-trivial  coupling condition at the interface. Consequently additional difficulties appear, 
which are overcome through a detailed dynamical system study of the associated heteroclinic orbit in phase space.
Our approach is an extension of the one used by Zeldovich \cite{livreZeldovich} in the study of laminar flames, with the addition of a variable interface temperature.
Interpretation of the behaviour of the system in phase space brings a better understanding of the role of the interface and the influence of the different parameters.

The phase space approach also naturally leads to the development of a very efficient 
numerical shooting method to iteratively find the speed of the wave and ultimately its profile
with arbitrary precision.
\modif{Other approaches for example based on the topological degree theory have been used to establish the existence and uniqueness of the travelling wave solution for laminar flames \cite{degreTopologique_Berestycki,giovangigli1999} and may be applicable to the solid propellant case with fewer assumptions, however they do not allow for the derivation of a numerical method to generate solution profiles.
We therefore follow the phase space approach and develop a shooting method to compute the burning rate and solution profiles}. We propose to assess its efficiency and potential by first verifying the proposed numerical strategy in comparison with a CFD code developed at ONERA with the same level of modelling and then follow up with a parametric study of the influence of the activation energy of the chemical reaction in the gaseous phase. Eventually, the improvement of our approach compared to some \modif{of the previously} mentioned analytical models is investigated as well as the influence of some assumptions such as the unitary Lewis number. To the best of our knowledge, no such study has yet been presented.

The paper is organized as follows: In section 2, we first introduce the generic equations describing the gas flow, the solid phase and the coupling conditions. 
Adding a series of assumptions, we gradually simplify
the system, while still retaining the most important physical features. We introduce a travelling wave solution and derive the equations that describe
the wave profile. The impact of the wave \modif{velocity} $c$ is explained. We derive some general relations to obtain characteristic values for a non-dimensionalisation of the 
problem. In section 3, extending  Zeldovich's approach for laminar flames, we prove the existence and uniqueness of the self-similar temperature profile and
wave speed by a dynamical system approach and focus on how to handle the specific  solid-gas interface flux condition. A physical interpretation of the result is provided.
Section 4 is devoted to the presentation of the multiple algorithms used in the numerical resolution based on a shooting method,
and the comparison for various levels of modelling assumptions with a CFD code developed at ONERA.
A conclusion and an assessment of the efficiency and potential of the approach is given in section 5.

\section{General modelling, proper set of simplifications and travelling wave formalism}

In this section we start by presenting the general assumptions usually made for advanced models in high-fidelity simulations, as well as the associated set of equations.
We introduce some additional assumptions also used in analytical models such as in \cite{williamsDBW} but, as opposed to these models, no assumption is made
about the activation energy of the gas phase.
We derive a set of equations that is simple enough to envision an analytical study of travelling wave solutions.
Although this system is much simpler than the original system, it is expected that the relaxation of the gas phase activation energy may allow for a more
realistic picture of the combustion of a solid propellant, as compared to existing analytical models.

\subsection{Derivation of the model, related assumptions and travelling wave formulation}
\label{part:derivation_modele_base}
Composition and temperature variations in the solid phase decomposition zone and in the gas flame structure are often important in the direction normal to the
burning propellant surface, so that it is common to adopt a one-dimensional approach which greatly simplifies the mathematical developments.
The phenomenon is studied in the Galilean reference frame \referentielRG~ and a schematic representation is provided in Figure \ref{fig:modele1D}.
\begin{figure}
  \centering
  \includegraphics[width=0.6\linewidth]{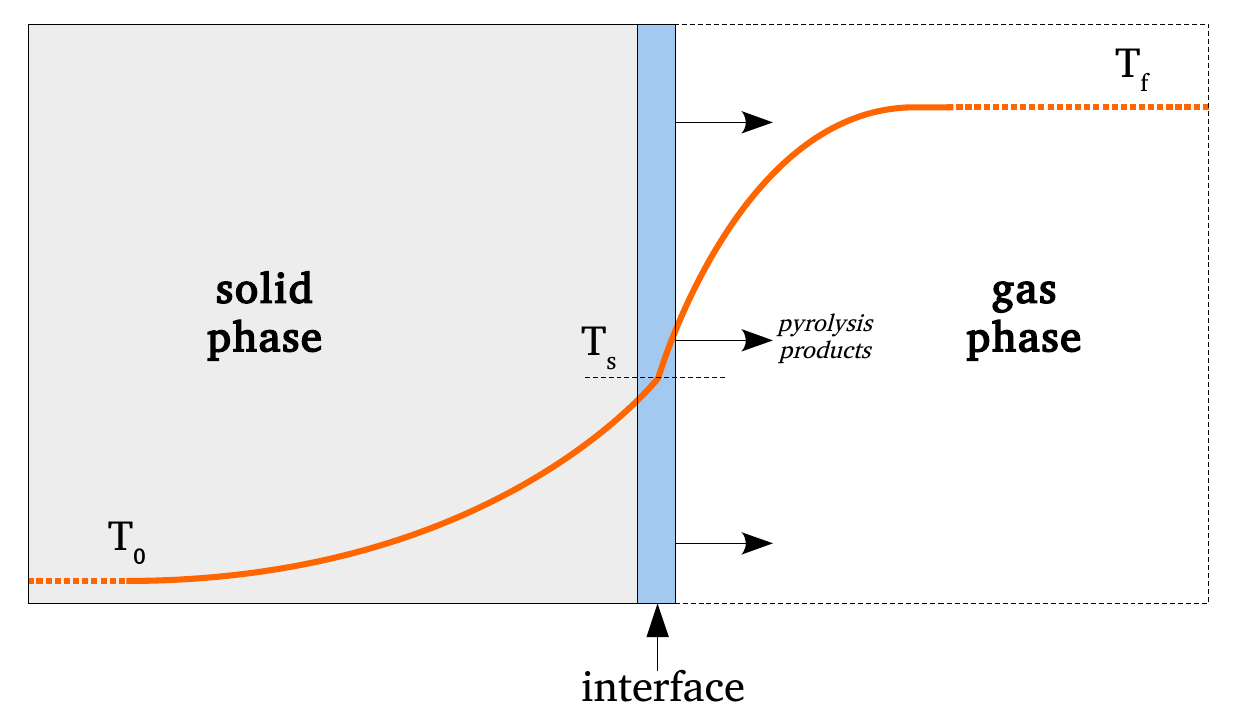}
  \caption{One-dimensional model of solid propellant combustion}
  \label{fig:modele1D}
\end{figure}

The solid and gas phases are separated by a superficial degradation zone, which is a transition zone where both gas and liquid species are observed.
This zone is usually thin, typically one micron or less \cite{TanakaBecksteadAP} for ammonium perchlorate, a few dozen microns for
HMX/RDX \cite{DavidsonBecksteadHMX, DavidsonBecksteadRDX},
and its thickness decreases as pressure increases.
It has therefore been common to represent this zone as an infinitely thin interface, as in all previously mentioned analytical models.
We adopt the same representation, with \modif{$\sigma(t)$} the position of the interface at time $t$. The behaviour of this zone is given by a pyrolysis law.
The solid phase (the propellant) is semi-infinite and is located between $-\infty$ and
$x=\sigma(t)$.
The gas phase is also semi-infinite and is located between $x=\sigma(t)$ and $+\infty$.
The instantaneous regression speed is $c(t) = \derivshort{\sigma}{t}(t)$.
Most of the solid propellant numerical models assume the following:
\begin{assumption}[]
The solid phase is inert, incompressible and inelastic.
No species diffusion takes place in the solid.
Far from the burning surface, the solid phase is at its initial temperature, $T(-\infty) = T_0$.
All gradients vanish at $x=-\infty$.
\end{assumption}

\begin{assumption}[]
The gas phase is  constituted of a mixture of reacting ideal gases in the low-Mach number limit and the pressure $P$ does not vary with time.
\end{assumption}

\begin{assumption}[]
\label{assum:interface}
No species or heat accumulation takes place at the interface.
The temperature is continuous across the interface and its value is denoted $T_s(t)$.
The gasification process is controlled by a pyrolysis reaction concentrated at the interface.
The mass flow rate of gaseous species expelled by the solid phase through the pyrolysis reaction is given by a pyrolysis law of the form:
\begin{equation}
  \label{eq:pyrolysisLawOrigin}
  \massflux = f(T_s,T_0,P)
\end{equation}
The mass flow rate is $\mathcal{C}^\infty$ with respect to $T_s$ and satisfies the property $\partialdershort{f}{T_s} > 0$,
i.e. the mass flow rate increases with increasing surface temperature.
\end{assumption}

The pyrolysis law used in our numerical applications is $\massflux = A_p \exp \left(-T_{ap}/T_s\right)$.
This simple law can be extended to include a dependence on pressure (typically $P^n$) and surface temperature ($T_s^\beta$ with $\beta>0$)
in the pre-exponential factor $A_p$.
This law is frequently used for stationary as well as transient studies of solid propellant combustion,
although it ignores some potentially important effects which only appears in more comprehensive pyrolysis relations, deduced for
instance from activation energy asymptotics with zeroth-order reaction inside the solid \cite{Brewster_simplified_combustion}.
All the conclusions made in this paper remain valid for any other pyrolysis law $\massflux$, as long as the mass flow rate is \modif{increasing} 
with $T_s$. \modif{In our study $T_0$ and $P$ are constants, therefore the mass flow rate will be considered a function of $T_s$ only for clarity.}

An additional assumption is proposed on the pyrolysis law for mathematical requirements, without impacting the physics. It is similar to the cold boundary difficulty
resolution \cite{coldBoundaryKarman1953,coldBoundaryMarble1956,coldBoundary1991} and will allow for an easier theoretical analysis.

\begin{assumption}[]
\label{assum:modif_pyro}
 The propellant will not be consumed at $T_s \leq T_0$, $T_0$ being in all realistic cases close to the ambient temperature.
 That behaviour is usually not depicted exactly by the pyrolysis laws found in the literature, which usually only tends to $0$ as $T_s$ tends to $0$.
 Therefore we introduce a slightly modified pyrolysis law that contains a cut-off so that $\massflux$ smoothly goes to 0 as $T_s$ approaches $T_0$:
  \begin{equation}
  \label{eq:modifiedPyrolysisLaw}
   \massflux = f(T_s, T_0, P) ~\phi(T_s-T_0) 
 \end{equation}
 with $\phi$ a \modif{smooth function} 
 such that $\phi(y) = 0$ for $y \leq 0$ and $\phi(y)$ quickly reaches a value of $1$ as $y$ becomes greater than 0. The function 
 $\phi$ can typically be a sigmoid function. 
 \end{assumption}

The \modif{smooth cut-off} introduced here is solely used in our theoretical analysis, to \modif{facilitate} the proof of existence and uniqueness of the \modif{travelling wave solution}.
The numerical shooting method presented in Section \ref{section:numCFD} does not require \modif{such a cut-off}.

To further simplify the problem, we assume:
\begin{assumption}
 Radiative effects are neglected.
\end{assumption}

This means that no external flux, e.g. laser flux, \modif{no radiation from the gas phase and no radiative heat loss from the solid} are considered. Radiative heat losses were shown to
allow for two different \modif{travelling wave} solutions to be found at a given pressure \cite{deflagrationLimitJohnson}, with only one being stable. 
\modif{The inclusion of such phenomena in the modelling and its impact on the results obtained in the present investigation are discussed in the conclusion.}

From the mathematical point of view, we assume a certain regularity of the solution profiles as stated below.
\begin{assumption}
\label{assum:regulariteSolution}
 All solution components are \modif{smooth functions of $x$} 
 in each separate phases, but may be discontinuous at the interface.
\end{assumption}

Using the heat equation to model the evolution of the temperature inside the solid, and the Navier-Stokes equations with reactions and species transport
for the gas phase allows us to describe the evolution of our system.
The solid phase is represented by its temperature $T(x,t)$ and its constant density $\rho_s$.
The gas phase is described by the density $\rho(x,t)$, the constant pressure $P$, the flow speed $u(x,t)$, and the temperature $T(x,t)$.
The reactive aspect of the flow with $n_e$ species (symbol $\species{i}$, $i \in \llbracket 1,n_e \rrbracket$) is taken into account with the addition of the transport equations
for the species mass fractions $Y_i(x,t)$ and
the addition of the volumetric heat release as a source term in the energy equation. The mass production rate of the i-th species per unit volume is 
$\tauxproduction{i}$, in $\textnormal{kg.m}^{-3}\textnormal{.s}^{-1}$.
We consider $n_r$ chemical reactions of the form:
$ \sum\nolimits_{i=1}^{n_e} \nu_{i,r}' \species{i} \rightarrow \sum\nolimits_{i=1}^{n_e} \nu_{i,r}'' \species{i}$.
We introduce $\nu_{i,r} = \nu_{i,r}'' - \nu_{i,r}'$, the global stoichiometric coefficient.
The reaction rate of the r-th reaction is $\tauxreaction{r}$, in $\textnormal{mol.m}^{-3}\textnormal{.s}^{-1}$.
It is typically a generalized Arrhenius law dependent on species concentrations, temperature and pressure. 
We have the relation $\tauxproduction{i} = \molarmass_i \sum\nolimits_{r=1}^{n_r} \nu_{i,r} \tauxreaction{r}$.
The molar enthalpy of the i-th species is $h_{i,mol}(T) = \hstandardmol{i} + \molarmass_i c_{p,i}(T-T_{std})$ in $\textnormal{J.mol}^{-1}$ 
with $\molarmass_i$ the molar mass of this species, $c_{p,i}$ its specific heat, and $T_{std}$ the standard temperature
at which all standard molar enthalpies $\hstandardmol{i}$ are defined.
The enthalpies and standard enthalpies per unit mass are written $h_i$ and $\hstandardmass{i}$ respectively.
The gas and solid phases are coupled at the interface through boundary conditions obtained by integration of the energy and transport equations around the interface.
Assuming the pyrolysis process is concentrated at the interface, we introduce the ``injection'' mass fractions \modif{$Y_{i}(\sigma^-)$}
for the different gaseous species, which indicates the mass fractions obtained after pyrolysis directly at the interface, before entering the gas phase.
Our original system of equations is given here in the Galilean reference frame \referentielRG.

\paragraph*{The full model\newline}

\label{system:base:equations}
The solid phase at \modif{$x<\sigma(t)$} is subject to:
\begin{equation}
    \rhocomplet_s c_s \partialdershort{\Tcomplet}{t} - \partialdershort{\left(\lambda_s \partialdershort{\Tcomplet}{\xcomplet}\right)}{\xcomplet} = 0 \label{eq:base:solid}
\end{equation}
\noindent with the boundary condition for the resting temperature of the solid:
\begin{equation}
    \Tcomplet({-\infty}) = \Tcomplet_0 \label{eq:base:BCs:minf}
\end{equation}

\noindent The gas phase at \modif{$x>\sigma(t)$} is subject to the following partial differential equations:
\begin{curlyeqset}{1}{1pt}
    & \partialdershort{\rhocomplet}{t} + \partialdershort{\rhocomplet \ucomplet}{\xcomplet} = 0  \label{eq:base:continuity}\\
    & \partialdershort{\rhocomplet \Ycomplet_i}{t} + \partialdershort{\left(\rhocomplet (\ucomplet + V_i) \Ycomplet_i\right)}{\xcomplet} =
	      \tauxproduction{i}
	      \label{eq:base:species}\\
    & \rhocomplet c_p \partialdershort{ \Tcomplet}{t} + \rhocomplet c_p \ucomplet \partialdershort{T}{x}
    - \partialdershort{\left(\lambda_g\partialdershort{\Tcomplet}{\xcomplet}\right)}{\xcomplet}
    + \rho \partialdershort{\Tcomplet}{\xcomplet} \sum\limits_{i=1}^{n_e} c_{p,i} Y_i V_{i}
    = -\sum\limits_{i=1}^{n_e} h_i \tauxproduction{i}  \label{eq:base:energy}
\end{curlyeqset}

\noindent with the following conditions at $x=+\infty$, ensuring the gas phase reactions are complete:
\begin{curlyeqset}{1}{1pt}
  &\partialdershort{\Tcomplet}{\xcomplet}(+\infty) = 0\label{eq:base:BCs:gradpinfT}\\
  &\partialdershort{\Ycomplet}{\xcomplet}(+\infty) = 0\label{eq:base:BCs:gradpinfY}
\end{curlyeqset}

\noindent Both phases are coupled at the interface by the following conditions:
\begin{curlyeqset}{1}{1pt}
  &\Tcomplet_{\sigma^-} = \Tcomplet_{\sigma^+} = \Tcomplet_s \label{eq:base:BCs:interfaceContinuityT}\\
  &\left(\lambda_s \partialdershort{\Tcomplet}{x}\right)_{\sigma^-} = \massflux \Qpyro + \left( \lambda_g \partialdershort{\Tcomplet}{x} \right)_{\sigma^+} \label{eq:base:BCs:interfaceBilanT}\\
  &\left(\massflux \Ycomplet_i\right)_{\sigma^-} =  \left( \massflux \Ycomplet_i + J_i \right)_{\sigma^+} ~~~\forall~i ~\in~ \llbracket 1, n_e \rrbracket\label{eq:base:BCs:interfaceBilanY}
\end{curlyeqset}

with the ideal gas law:
\begin{equation}
     \rhocomplet = \dfrac{P}{R \Tcomplet \sum\limits_{i=1}^{n_e} \frac{\Ycomplet_i}{\molarmass_i}}
\end{equation}

Finally, the pyrolysis mass flow rate $\massflux$ is given by the pyrolysis law \eqref{eq:modifiedPyrolysisLaw}, \modif{and $\sigma$ is governed by $\derivshort{\sigma}{t} = c(t) = -\massflux(\Tcomplet_s(t)) / \rhocomplet_s$, with initial condition $\sigma(0)=\sigma_0$.}\\

This system involves the following variables: $R$ the ideal gas constant, $c_p$ the mixture-averaged specific heat, $V_{i}$ is the diffusion velocity of the i-th species,
$\Qpyro$ the heat of the pyrolysis reaction per unit mass of solid propellant consumed.
Equation \eqref{eq:base:BCs:interfaceBilanT}
means that the heat conducted from the gas phase into the solid and the heat generated by the pyrolysis process (if the pyrolysis is exothermic) are used to heat up the solid propellant and sustain the combustion.
Equation \eqref{eq:base:BCs:interfaceBilanY} is the species balance, i.e. the flow rate of the i-th species generated by the pyrolysis is equal to the flow rate of
this species leaving the surface in the gas phase, minus the species diffusion flow rate $J_i = \rho V_i Y_i$.

We introduce the following set of additional assumptions, also shared by the classical analytical models:
\begin{assumption}
  The specific heat of the solid $c_s$ is constant.
  The solid phase thermal conductivity $\lambda_s$ is constant.
\end{assumption}

\begin{assumption}
\label{assum:gasphase}
  The gas phase contains two species: the reactant $\Gi$ and the product $\Gii$, with mass fractions $Y_1$ and $Y_2$.

  There is only one irreversible reaction $\nu_1' \Gig \rightarrow \nu_2'' \Giig$, whose reaction rate $\tauxreaction{}$ is positive or 0.

  Both species have the same molar mass $\molarmass$ and therefore opposite global stoichiometric coefficients.
  
  The species specific heats $c_{p,i}$ are all equal and constant: $c_{p,i}=c_p ~ \forall i$, with $c_p$ the constant gas specific heat.
  
  No binary species diffusion takes place in the gas phase, and the molecular diffusion is represented by Fick's law:
  $J_i = \rho V_i Y_i = -\rho D_i \partialdershort{Y_i}{x}$.
    
  The species diffusion coefficients $D_i$ are equal, $D_i=D_g ~ \forall i$. 
  
  The species $\Gi$ is completely consumed at $x=+\infty$. All gradients are zero at $x=+\infty$.
 \end{assumption}
  
\begin{assumption}
\label{assum:injection}
 The pyrolysis reaction transforms the solid propellant $P$ into the species $\Gi$.
\end{assumption}
Using these assumptions, we can simplify the equations.
We introduce $D_{th} = {\lambda_g}/(\rho c_p)$ the thermal diffusivity of the gas, $D_s= \lambda_s/(\rho_s c_s)$ the thermal diffusivity of the solid propellant,
and the stoichiometric coefficient $\nu = \nu_1 = -\nu_2 = - \nu_1'<0$.
Having only two species, we replace the transport equation for $Y_2$ by the global mass conservation $Y_2 = 1 - Y_1$.
We introduce $\Qgasmol = \molarmass \left( \nu_1' \hstandardmass{1} - \nu_2'' \hstandardmass{2} \right) = \molarmass \nu (\hstandardmass{2}-\hstandardmass{1})$,
the molar heat of the reaction in the gas phase.


As we aim at studying steady, self-similar combustion waves, we look for a solution in the form of a travelling wave $f(x,t) = \hat f(x-ct)$ for all the variables, \modif{which moves at a time-independent velocity $c$ in the reference frame \referentielRG .}
This is equivalent to performing the variable change $ \xwave = x - ct$, as described in \cite{theseShihab}. \modif{The regression velocity of the propellant surface is $c$}, it should thus be considered negative in our study.
If we perform this variable change in our equations, the travelling wave we are looking for becomes a stationary solution. In particular,
the interface is at the constant abscissa \modif{$\xwave=\sigma_0$, which we assume to be zero for clarity}.
To highlight the fact that the variables associated with these new equations are different from the previous ones, we use the notation ``$~\hat{\cdot}~ $'',
and we introduce $\Ywave=\Ywave_1$ for the sake of simplicity. Overall, we obtain the following system of equations.

\paragraph*{The simplified travelling wave model\newline}
\label{system:onde:equations}
For $\xwave<0$:
\begin{equation}
    -c \derivshort{\Twave}{\xwave} - D_s \derivshort{\Twave}{\xwave\xwave} = 0 \label{eq:onde:solid}
\end{equation}
    For $\xwave>0$:
    \begin{curlyeqset}{1}{1pt}
        & - c \derivshort{\rhowave}{\xwave} + \derivshort{(\rhowave \uwave)}{\xwave} = 0  \label{eq:onde:continuity}\\
        & \rhowave (\uwave-c) \derivshort{\Ywave}{\xwave} - \derivshort{}{\xwave}\left( \rhowave D_g \derivshort{\Ywave}{\xwave} \right)
	    = \nu \molarmass \hat{\tauxreaction{}} \label{eq:onde:species}\\
        & \rhowave (\uwave-c) \derivshort{\Twave}{\xwave} - \derivshort{}{\xwave}\left( \rhowave D_{th} \derivshort{\Twave}{\xwave} \right)  = 
            \dfrac{\hat{\tauxreaction{}} \Qgasmol}{c_p}\label{eq:onde:energy}
    \end{curlyeqset}
    with the ideal gas law:
    \begin{equation}
     \rhowave = \dfrac{P \molarmass}{R \Twave}
    \end{equation}

\noindent The boundary conditions are:
    \begin{curlyeqset}{1}{1pt}
	&\Twave({-\infty}) = \Twave_0 \label{eq:onde:BCs:minf}\\
        &\Twave(0^-) = \Twave(0^+) = T_s \label{eq:onde:BCs:Tinterface}\\
        &\left(\lambda_s \derivshort{\Twave}{\xwave}\right)_{0^-} = \massflux \Qpyro + \left( \lambda_g \derivshort{\Twave}{\xwave} \right)_{0^+} \label{eq:onde:BCs:bilanInterface}\\
        &\left(\massflux \Ywave \right)_{0^-} =  \left( \massflux \Ywave -\rhowave D_g \derivshort{ \Ywave}{x} \right)_{0^+}\label{eq:onde:BCs:bilanYInterface}\\
        &\derivshort{\Twave}{\xwave}(+\infty) = 0\label{eq:onde:BCs:gradpinfT}\\
        &\derivshort{\Ywave}{\xwave}(+\infty) = 0\label{eq:onde:BCs:gradpinfY}
    \end{curlyeqset}

\noindent As before, the pyrolysis mass flow rate $\massflux$ is given by the pyrolysis law \eqref{eq:modifiedPyrolysisLaw}.

\paragraph*{\textbf{ \textit{ Key steps to obtain the set of simplified equations} } }

 In the full system, 
 the first two terms of all gas phase transport equations can be simplified by using the continuity equation, for example:
 $$\partialdershort{\rho Y_i}{t} + \partialdershort{\rho u Y_i}{x} = \rho \partialdershort{Y_i}{t} + \rho u \partialdershort{Y_i}{x}$$
 
 In the gas phase energy equation \eqref{eq:base:energy}, we can expand the term of heat diffusion caused by chemical diffusion,
 using our assumptions that all species specific heats are equal:
 $\rho\partialdershort{T}{x} \sum\limits_{i=1}^{n_e} c_{p,i} Y_i V_i = \rho c_p \partialdershort{T}{x} \sum\limits_{i=1}^{n_e} Y_i V_i = 0$,
 by definition of the diffusion velocities.
 
 The term of heat production due to the single chemical reaction can be simplified as follows:
 $$-\sum_{i=1}^{n_e} h_i \tauxproduction{i} = -\sum_{i=1}^{n_e} \tauxproduction{i} \left(\hstandardmass{i} + c_p (T-T_{std})\right)
 = -\tauxreaction{} \sum_{i=1}^{n_e} \nu_i \molarmass_i \left(\hstandardmass{i} + c_p (T-T_{std})\right)$$
 
 Using our assumption that all species molar masses are equal, the term in $c_p$ disappears according to mass conservation ($\sum\limits_{i=1}^{n_e} \nu_i \molarmass_i = 0$):
 $$ -\tauxreaction{} \sum_{i=1}^{n_e} \nu_i \molarmass_i \left(\hstandardmass{i} + c_p (T-T_{std})\right)
 = -\tauxreaction{} \molarmass \sum_{i=1}^{n_e} \nu_i \hstandardmass{i}
 = \tauxreaction{} \Qgasmol$$
 
 The last step is to perform a variable change, such that the interface remains at a constant abscissa.
 Therefore we introduce the new space variable $\xwave = \xwave(\xcomplet, t) = \xcomplet - \int_0^t c(\eta)d\eta$,
 where $c(t)$ is the instantaneous regression \modif{velocity} 
 of the interface.
 In the Galilean reference frame \referentielRG, the interface lies at the abscissa 
 $\sigma(t) = \sigma(0) + \int_0^t c(\eta)d\eta$, therefore following our variable change, the new interface abscissa is
 $\hat{\sigma}(t) = \sigma_0$, and we assume, without any loss of generality, that this position is $0$.
 
 If, for a function $f(\xcomplet,t)$, 
 we introduce $\hat{f}(\xwave, t)$ such that $\hat{f}(\xwave(\xcomplet,t), t) = f(\xcomplet,t)$, we can derive the following relations:
 $$(\partialdershort{f}{\xcomplet})_t = (\partialdershort{\hat{f}}{\xwave})_t
 \qquad\qquad
 (\partialdershort{f}{t})_x = -c(t) (\partialdershort{\hat{f}}{\xwave})_t + (\partialdershort{\hat{f}}{t})_{\xwave}
$$
\noindent Using these relations and our steady-state assumption $(\partialdershort{\hat{f}}{t})_{\xwave} = 0$,
we can transform all our partial differential equations and obtain the simplified system of equations.
The change of space variable results in the introduction of an additional
convection term at uniform speed.

\begin{remark}
Note that in the unsteady case, the variable change is not equivalent to a change of reference frame. Indeed the gas flow velocity is still the one observed in the
Galilean reference frame \referentielRG. If $c$ varies with time, no inertial body force appears.
\end{remark}

\begin{remark}
The regression \modif{velocity}    
$c$ appears in these equations in various manners:
in the convective terms,
in the interface boundary conditions,
in the mass flow rate at the interface $\massflux = -\rho_s c$,
and through $T_s$, as the mass flow rate is linked to surface temperature through \eqref{eq:modifiedPyrolysisLaw}.
\end{remark}

For further simplifications, we introduce additional assumptions, which do not alter our simplified system of equations:
\begin{assumption}
\label{assum:cpcs}
The specific heats of the solid and gas phases are equal: $c_p=c_s$.
\end{assumption}

\begin{assumption}
\label{assum:lewis}
The Lewis number $\mathrm{Le} = D_{th}/D_g$ is 1 in the gas phase, i.e. the heat and species diffusions are equivalent. We introduce
$D = D_g = D_{th}$. 
\end{assumption}

\begin{remark}
Even if questionable, the Assumption H\ref{assum:cpcs} is often used in the literature
\cite{massa_2D, jackson_2D, Brewster_simplified_combustion, Miller_idealized} and
the results obtained are still quantitatively correct.
The main effect of this assumption is that $\Qpyro$ is a constant which only depends on the standard enthalpies.
\end{remark}
\begin{remark}
\label{remark:Qpyro}
The complete pyrolysis reaction $\Ps \rightarrow \Gig$ can be decomposed into two successive reactions:
\begin{itemize}
 \item $\Ps \rightarrow \Gis$, the transformation
    of the solid propellant $\Ps$ into the pyrolysis product at solid state $\Gis$, with the heat of reaction
    $Q_s = \hstandardmass{\Ps} - \hstandardmass{\Gis} + (c_s - c_{P_{\Gis}}) (T_s-T_{std})$,
    with $c_{P_{\Gis}}$ the specific heat of the pyrolysis product $\Gi$ at solid state.
 \item $\Gis \rightarrow \Gig$, the sublimation of the solid pyrolysis product $\Gis$ into $\Gig$, at constant temperature $T_s$, with the latent heat
    $L_v = h_{\Gig}(T_s) - h_{\Gis}(T_s) = (\hstandardmass{\Gig} - \hstandardmass{\Gis}) + (c_p - c_{P_{\Gis}})(T_s - T_{std})$
\end{itemize}
This leads to $ \Qpyro = Q_s - L_v =  \hstandardmass{\Ps} - \hstandardmass{\Gig} + (c_s-c_p)(T_s-T_{std})$ which depends linearly on $T_s$.
On the contrary, the heat of reaction $\Qgas$ for  $\Gig \rightarrow \Giig$ in the gas phase does not depend
on  temperature as both species have the same specific heat.

The assumption $c_s=c_p$ makes the upcoming theoretical analysis much easier by removing the dependence of $\Qpyro$ on $T_s$.
However, the numerical method presented further in this paper does not rely on this assumption.
\end{remark}

\subsection{Conservation properties}
Starting from a detailed modelling of the different processes at stake, we have introduced gradual simplifications based on several physical assumptions.
We may now perform simple mathematical manipulations on the simplified travelling wave model to derive several balance equations.
These considerations will allow us to obtain characteristic scales from which
dimensionless variables can be formed. In order to avoid a notation overload, we drop the symbol ``$~\hat{\cdot}~$''.

\begin{proposition}
\label{prop:bilan:massflux}
The conservation of the mass flow rate implies for $x > 0$: 
$$\rho(x) (u(x) -c) = -\rho_s c = \massflux$$
\end{proposition}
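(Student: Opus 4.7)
The plan is twofold: integrate the continuity equation on $(0,+\infty)$ to show that $\rho(u-c)$ is spatially constant, and then identify that constant with the pyrolysis mass flow rate $\massflux = -\rho_s c$ through mass conservation across the interface.

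For the first step, because $c$ is a constant in the travelling wave formulation, equation \eqref{eq:onde:continuity} may be rewritten as
$$\frac{d}{dx}\bigl[\rho(u-c)\bigr] = -c\,\frac{d\rho}{dx} + \frac{d(\rho u)}{dx} = 0.$$
Under the regularity stated in Assumption H\ref{assum:regulariteSolution}, this implies that $\rho(u-c)$ takes a single value $K\in\mathbb{R}$ on all of $(0,+\infty)$.

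For the second step, the constant $K$ is fixed by mass conservation at the interface, which follows from Assumption H\ref{assum:interface}. In the travelling wave frame the interface is stationary at $x=0$; the solid phase, at rest in the Galilean frame \referentielRG, moves at velocity $-c>0$ and hence delivers a mass flux $-\rho_s c$ into the interface. By the model definition $c = -\massflux/\rho_s$, this flux equals $\massflux$. On the gas side, the mass flux leaving the interface at $x=0^+$ in the travelling wave frame is $\rho(0^+)(u(0^+)-c) = K$, so equating the two yields $K=\massflux$, which gives the stated identity for every $x>0$.

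I do not expect any real obstacle here; the proposition is essentially the integrated form of the continuity equation, supplemented by the interface mass balance. The only point worth emphasising is that the interface mass balance is implicit in the definition of $\massflux$ and in Assumption H\ref{assum:interface}, rather than being an independent equation listed in the simplified travelling wave system: summing the species balance \eqref{eq:onde:BCs:bilanYInterface} over $\Gi$ and $\Gii$ only yields the trivial identity $\massflux|_{0^-}=\massflux|_{0^+}$ once $Y_1+Y_2=1$ is used, so the identification $K=\massflux$ must be invoked as total mass conservation rather than deduced from the species balance alone.
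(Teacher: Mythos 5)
Your proof is correct and follows essentially the same route as the paper: integrate the continuity equation \eqref{eq:onde:continuity} to show $\rho(u-c)$ is constant in the gas phase, then invoke the no-accumulation condition of Assumption H\ref{assum:interface} to identify that constant with the solid mass consumption rate $-\rho_s c = \massflux$. Your closing remark — that the identification must come from total mass conservation at the interface rather than from summing the species balances \eqref{eq:onde:BCs:bilanYInterface}, which only yield a trivial identity — is a valid clarification of what the paper leaves implicit, but does not alter the argument.
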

\begin{proof}
We integrate the continuity equation \eqref{eq:onde:continuity} in the gas phase from $0^+$ to $x$. We obtain:
$$\rho(x) ( u(x) - c) = \rho(0^+) ( u(0^+) - c)$$
Following Assumption H\ref{assum:interface}, no accumulation takes places at the interface.
Hence the gas mass flow rate must be equal to the rate of propellant mass consumption $-\rho_s c$, which is the proposed result.
\end{proof}

\begin{remark}
 We may now explain why the momentum equation is not considered.
 In the low-Mach framework, this equation would, in steady-state, essentially reduce to: $\massflux \derivshort{u}{x} = \derivshort{\tilde{P}}{x}$,
 with $\tilde{P}$ the hydrodynamic pressure, which is a pressure perturbation of the order of $\mathrm{Ma}^2$, with $\mathrm{Ma}$ the Mach number.
 Due to the one-dimensionality of our approach, the velocity field is directly related to the spatial evolution of $\rho$ through the continuity equation.
 Hence, the momentum equation is not needed to determine $u$. We may still use it to determine the hydrodynamic pressure field.
 In particular, we would find that the hydrodynamic pressure is increasing with $x$, and that the overall pressure variation across the gas phase
 is $\Delta P = -\massflux \Delta u$. Typically we obtain $-10$ Pa, which is considerably lower than the average pressure (around $1$ to $10$ MPa).
 This legitimates our assumption of uniform $P$.
 Had we used a multi-dimensional approach, we would not have been able to decouple the velocity field from the hydrodynamic pressure field, and we would have needed to include
 the momentum equation.
\end{remark}

\begin{proposition}
\label{prop:bilan:omega}
The complete consumption of $\Gi$ implies:
$$ \int_0^{+\infty}  \tauxreaction{}(T(x), Y(x)) dx = - \dfrac{\massflux}{\molarmass \nu}$$
\end{proposition}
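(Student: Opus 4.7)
The plan is to integrate the species transport equation \eqref{eq:onde:species} across the gas phase and combine the result with the interface balance \eqref{eq:onde:BCs:bilanYInterface} and the far-field conditions.

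First, I would rewrite the convective term in \eqref{eq:onde:species} using Proposition \ref{prop:bilan:massflux}, which gives $\rhowave(\uwave - c) = \massflux$ throughout the gas phase. The species equation then reads
\begin{equation*}
\massflux \, \derivshort{\Ywave}{\xwave} - \derivshort{}{\xwave}\!\left(\rhowave D_g \derivshort{\Ywave}{\xwave}\right) = \nu \molarmass \, \hat{\tauxreaction{}}.
\end{equation*}
Integrating from $\xwave = 0^+$ to $\xwave = +\infty$ and using the boundary condition \eqref{eq:onde:BCs:gradpinfY} on the diffusive flux, together with the complete consumption condition $\Ywave(+\infty) = 0$ from Assumption H\ref{assum:gasphase}, I obtain
\begin{equation*}
-\massflux \, \Ywave(0^+) + \left(\rhowave D_g \derivshort{\Ywave}{\xwave}\right)_{0^+} = \nu \molarmass \int_0^{+\infty} \hat{\tauxreaction{}}(\Twave(\xwave), \Ywave(\xwave)) \, d\xwave.
\end{equation*}

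Next, I would use the interface species balance \eqref{eq:onde:BCs:bilanYInterface}, which states $\massflux \Ywave(0^-) = \massflux \Ywave(0^+) - (\rhowave D_g \derivshort{\Ywave}{\xwave})_{0^+}$. By Assumption H\ref{assum:injection}, pyrolysis produces only species $\Gi$, so the injection mass fraction on the solid side satisfies $\Ywave(0^-) = 1$. Hence
\begin{equation*}
-\massflux \, \Ywave(0^+) + \left(\rhowave D_g \derivshort{\Ywave}{\xwave}\right)_{0^+} = -\massflux \, \Ywave(0^-) = -\massflux.
\end{equation*}
Equating the two expressions and dividing by $\nu \molarmass$ yields the claimed identity.

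The step requiring the most care is the identification $\Ywave(0^-) = 1$: it is not a direct boundary condition but rather a consequence of Assumption H\ref{assum:injection} combined with the interpretation of the injection mass fractions introduced just before the full model. Once this is recognised, the remainder is a straightforward integration with boundary flux cancellation; no cold-boundary or existence issues need to be invoked here since the statement is conditional on the existence of a sufficiently regular travelling wave profile (Assumption H\ref{assum:regulariteSolution}).
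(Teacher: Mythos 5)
Your proof is correct and takes essentially the same route as the paper's: integrate the species equation \eqref{eq:onde:species} from $0^+$ to $+\infty$ using $\rhowave(\uwave - c) = \massflux$ from Proposition \ref{prop:bilan:massflux}, cancel the fluxes at infinity, and combine the interface balance \eqref{eq:onde:BCs:bilanYInterface} with $\Ywave(0^-)=1$ from Assumption H\ref{assum:injection}. The only difference is cosmetic: you make explicit the use of $\Ywave(+\infty)=0$ (complete consumption), which the paper leaves implicit in its phrase ``all gradients are zero at $+\infty$''.
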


\begin{proof}
We integrate the species transport equation \eqref{eq:onde:species} in the gas phase from $0^+$ to $+\infty$, utilizing the
mass flow rate balance result $\massflux = \rho(u-c)$.
As all gradients are zero at $+\infty$, we get:
$$ -\massflux Y(0^+) + \rho(0^+) D \derivshort{Y}{x}(0^+) = \molarmass \nu \int_{0^+}^{+\infty} \tauxreaction{}(T(x), Y(x)) dx$$
Using equation \eqref{eq:onde:BCs:bilanYInterface}, the left-hand side is equal to $-\massflux Y(0^-)$. Following H\ref{assum:injection}, we have $Y(0^-)=1$,
therefore we obtain the proposed result.
\end{proof}
%
We introduce $\Qgas= -\Qgasmol / (\nu \molarmass)$ the heat of reaction in  the gas phase per unit mass of $\Gig$ consumed.
As $\nu<0$, $\Qgas$ and $\Qgasmol$ are both positive.
\begin{proposition}
\label{prop:bilan:T}
The burnt gas temperature at $x=+\infty$ is $T_f = T_0 +  (\Qgas + \Qpyro)/c_p$.
\end{proposition}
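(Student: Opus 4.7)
The plan is to obtain $T_f$ through a global enthalpy balance: integrate the energy equation on each phase separately, and then stitch them together using the interface heat flux condition \eqref{eq:onde:BCs:bilanInterface}. Propositions \ref{prop:bilan:massflux} and \ref{prop:bilan:omega} will be instrumental, the former to recognize that $\rho(u-c) \equiv \massflux$ is a constant factor in the convective term of the gas energy equation, the latter to evaluate the integral of the reaction rate.

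First I would tackle the solid side. Integrating the solid heat equation \eqref{eq:onde:solid} from $-\infty$ to $0^-$, using $T(-\infty) = T_0$ and the vanishing of all gradients at $-\infty$ (assumption H1), one obtains a closed expression relating $\lambda_s (dT/dx)(0^-)$ to $T_s - T_0$, $\massflux$ and $c_s$, namely $\lambda_s (dT/dx)(0^-) = \massflux\, c_s (T_s - T_0)$ after using $\massflux = -\rho_s c$ and $D_s = \lambda_s/(\rho_s c_s)$.

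Next I would integrate the gas phase energy equation \eqref{eq:onde:energy} from $0^+$ to $+\infty$. The convective term is handled by pulling $\massflux$ out of the integral thanks to Proposition \ref{prop:bilan:massflux}, giving $\massflux (T_f - T_s)$. The diffusive term contributes $-\rho(0^+) D_{th} (dT/dx)(0^+) = -(1/c_p)\, \lambda_g (dT/dx)(0^+)$ since $\derivshort{T}{x}(+\infty)=0$ and $\rho D_{th} = \lambda_g/c_p$. The source term is evaluated thanks to Proposition \ref{prop:bilan:omega}, yielding $(\Qgasmol/c_p)\cdot(-\massflux/(\molarmass\nu)) = \massflux \Qgas / c_p$ by definition of $\Qgas$.

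The final step combines the two integrated equations through the interface balance \eqref{eq:onde:BCs:bilanInterface}, which allows substitution of $\lambda_g (dT/dx)(0^+) = \lambda_s (dT/dx)(0^-) - \massflux \Qpyro$. The assumption H\ref{assum:cpcs} ($c_p = c_s$) is essential here: it makes the solid and gas convective contributions consolidate into $\massflux(T_f - T_0)$, after which dividing by $\massflux \neq 0$ produces the stated formula. The whole argument is essentially an energy-conservation bookkeeping, with no real obstacle beyond correctly accounting for the jump at the interface; the trickiest bit is keeping track of signs and making sure that the dependence of $\Qpyro$ on $T_s$ indeed drops out under $c_s = c_p$ (as pointed out in Remark \ref{remark:Qpyro}), so that $\Qpyro$ can be treated as a constant in the balance.
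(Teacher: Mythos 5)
Your proposal is correct and follows essentially the same route as the paper's proof: integrating \eqref{eq:onde:solid} over the solid phase and \eqref{eq:onde:energy} over the gas phase (using Propositions \ref{prop:bilan:massflux} and \ref{prop:bilan:omega}), then combining the two balances through the interface flux condition \eqref{eq:onde:BCs:bilanInterface} together with Assumption H\ref{assum:cpcs}. Your remark that the $T_s$-dependence of $\Qpyro$ drops out when $c_s=c_p$ matches what the paper records in Remark \ref{remark:Qpyro}, so the two arguments coincide in substance.
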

\begin{proof}
 Integrating the energy equation \eqref{eq:onde:energy} in the gas phase between $0$ and $+\infty$ and using Proposition \ref{prop:bilan:omega}, we obtain:
 $$\massflux \left( T_f - T_s \right) = - \dfrac{\lambda_g}{c_p} \derivshort{T}{x}(0^+) + \dfrac{\Qgasmol}{c_p} \int_0^{+\infty}  \tauxreaction{}(T(x), Y(x)) dx
 = - \dfrac{\lambda_g}{c_p} \derivshort{T}{x}(0^+) + \dfrac{\massflux \Qgas}{c_p}$$
 
 Integrating the heat equation in the solid phase \eqref{eq:onde:solid} between $-\infty$ and $0$ and using Proposition \ref{prop:bilan:massflux}, we can write:
 $$\massflux (T_s-T_0) = +\dfrac{\lambda_s}{c_s} \derivshort{T}{x}(0^-)$$
 
 Using the interface boundary condition \eqref{eq:onde:BCs:Tinterface} and Assumption H\ref{assum:cpcs}, we can combine both energy balances and obtain the proposed result.
\end{proof}

\begin{remark}
 \label{remark:energyBalanceCsCp}
 If we do not assume $c_s = c_p$, the balance reads:
 $$ T_f = \left(1-\dfrac{c_s}{c_p}\right)T_s + \dfrac{c_s}{c_p} T_0 + \dfrac{\Qgas + \Qpyro}{c_p}$$
 This formula appears in many papers, however the dependence in $T_s$ is fictitious and may be misleading \cite{IBIRICU1975185}.
 Indeed it is expected that the complete energy balance does not depend on the mass flow rate. The dependence is removed when using the standard enthalpies to express
 $\Qpyro$ as a function of $T_s$ (see remark \ref{remark:Qpyro}):
 $$T_f = \dfrac{c_s}{c_p}T_0 + \dfrac{\hstandardmass{\Ps} - \hstandardmass{\Giig}}{c_p} + \left(1-\dfrac{c_s}{c_p}\right) T_{std}$$
 This expression can also be directly obtained by performing a simple energy balance between $-\infty$ and $+\infty$, neglecting the kinetic energy.
\end{remark}
\begin{proposition}
\label{prop:bilan:enthalpy}
 Under Assumption H\ref{assum:lewis}, we can define a combustion enthalpy which is constant in the gas phase:
 $$h = -\dfrac{\Qgasmol}{\nu\molarmass} Y + c_p (T-T_0) = c_p (T_f-T_0)$$
\end{proposition}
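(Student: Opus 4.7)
\begin{proofsketch}
The plan is to exploit the Lewis number assumption to combine the species and energy equations into a single convection-diffusion equation for $h$ with no source term, then use the far-field boundary conditions to force $h$ to be constant.

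First I would multiply the species equation \eqref{eq:onde:species} by $-Q_{mol}/(\nu\molarmass)$ and the energy equation \eqref{eq:onde:energy} by $c_p$. Under Assumption H\ref{assum:lewis}, both equations carry the same diffusion coefficient $\rho D$, so adding them produces
$$\rhowave(\uwave-c)\,\derivshort{h}{\xwave} - \derivshort{}{\xwave}\!\left(\rhowave D\,\derivshort{h}{\xwave}\right) = 0,$$
because the chemical source terms cancel exactly: $-Q_{mol}/(\nu\molarmass)\cdot \nu\molarmass\,\hat\tauxreaction{} + c_p\cdot \hat\tauxreaction{}Q_{mol}/c_p = 0$. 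The additive constant $-c_p T_0$ in the definition of $h$ does not affect the derivatives, so this equation holds for the $h$ as defined in the statement.

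Next I would use Proposition \ref{prop:bilan:massflux} to replace $\rhowave(\uwave-c)$ by the constant $\massflux$. Integrating the resulting equation once from $\xwave$ to $+\infty$ and noting that, by the far-field conditions \eqref{eq:onde:BCs:gradpinfT}--\eqref{eq:onde:BCs:gradpinfY}, $\derivshort{h}{\xwave}(+\infty)=0$, gives the first-order relation
$$\rhowave D\,\derivshort{h}{\xwave} = \massflux\bigl(h - h(+\infty)\bigr).$$

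The final step is to conclude that $h \equiv h(+\infty)$ on $(0,+\infty)$. Setting $g = h - h(+\infty)$, we have $g' = (\massflux/(\rhowave D))\, g$ with the coefficient $\massflux/(\rhowave D)>0$ (since $c<0$ implies $\massflux=-\rho_s c>0$). Any nonzero solution would therefore be exponentially growing or decaying in the wrong direction to satisfy $g(+\infty)=0$, so $g\equiv 0$. Evaluating at $+\infty$, where $Y(+\infty)=0$ by Assumption H\ref{assum:gasphase} and $T(+\infty)=T_f$ by Proposition \ref{prop:bilan:T}, yields $h = c_p(T_f-T_0)$ throughout the gas phase.

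No serious obstacle is anticipated; the only point requiring care is the cancellation of the source terms (which relies crucially on $\mathrm{Le}=1$ so that a single diffusion operator acts on both fields) and the sign of $\massflux/(\rhowave D)$ in the first-order ODE, which is what rules out nonconstant bounded solutions.
\end{proofsketch}
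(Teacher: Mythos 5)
Your proposal is correct and takes essentially the same approach as the paper: both combine the species and energy equations under the unitary Lewis assumption into a sourceless convection--diffusion equation for $h$, use the constant mass flux to integrate once against the far-field conditions, and exclude non-constant solutions via the exponential growth forced by $\massflux/(\rho D)>0$ (the paper packages this as the explicit formula $h(x)=h(0^+)+\derivshort{h}{x}(0^+)\int_0^{x}\exp\left(\massflux y/\beta\right)dy$ with $\beta=\rho D$ constant, which is just your first-order ODE integrated in the other direction). The only cosmetic difference is that you integrate from $x$ to $+\infty$ and argue on $g=h-h(+\infty)$, while the paper integrates from $0^+$ outward and concludes $\derivshort{h}{x}(0^+)=0$ from boundedness of $h$.
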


\begin{proof}
\newcommand{\Th}[0]{\ensuremath\check{T}} 
\newcommand{\Yh}[0]{\ensuremath\check{Y}} 
We introduce $\Yh = { Y \Qgasmol}/({\nu \molarmass})$ and
$\Th = {c_p (T-T_{0})}$.
We convert the equations \eqref{eq:onde:species} and \eqref{eq:onde:energy} to our new variables.
Using Assumption H\ref{assum:lewis}, introducing $\beta = \lambda_g/c_p = \rho D$ which is a constant and $h=\Th-\Yh$, we obtain:

$$ \massflux \derivshort{h}{x} = \beta \derivshort{h}{xx} $$
We can integrate this expression between the interface $0^+$ and $x$:
$$ h(x) = h(0^+) + \derivshort{h}{x}(0^+) \int_0^{x} \exp\left(\dfrac{\massflux y}{\beta}\right) dy$$
Using the boundary conditions at $+\infty$, we get $\derivshort{h}{x}(+\infty)=0$. Alternatively, $Y$ and $T$ being bounded, $h$ is bounded too.
Both considerations lead to $\derivshort{h}{x}(0^+) = 0$, hence we get:
$h(x) = h(+\infty) = c_p(T_f-T_0)$.
\end{proof}

\subsection{Dimensionless equations}

Using the equations from the simplified travelling wave model 
and the results obtained in the previous subsection, we can now write dimensionless equations for our problem.
We introduce $L$ a constant length scale.

\paragraph*{Dimensionless system}
\label{system:adim:equations}
 Introducing
 $\tilde{x} = \dfrac{x}{L}$,
 $\tilde{c}=\dfrac{cL}{D_s}$,
 $\tilde{\theta} = \dfrac{T(\tilde{x})-T_0}{T_f-T_0}$,
 $\eta = \dfrac{\lambda_s}{\lambda_g}$,
 using the notation $\cdot'=\derivshort{\cdot}{\tilde{x}}$ and $\tilde{\gamma}(\tilde{x}) = \tilde{\theta}'(\tilde{x})$ we have:
 
 \begin{curlyeqset}{1}{1pt}
 & \tilde{\theta}' = \tilde{\gamma} \label{eq:adim:relation_theta_gamma}\\
  & \tilde{c}\tilde{\gamma} + \tilde{\gamma}' = 0 ~&\textnormal{for $\tilde{x}<0$} \label{eq:adim:solid}\\
  & \eta\tilde{c} \tilde{\gamma} + \tilde{\gamma}' = -\tilde{\Psi}  ~~~&\textnormal{for $\tilde{x}>0$} \label{eq:adim:gas}
 \end{curlyeqset}

 with the dimensionless heat source term:
  \begin{equation}
   \label{eq:definition:psi}
   \tilde{\Psi}(\tilde{x}) = \dfrac{L^2 \Qgasmol}{\lambda_g (T_f-T_0)} \tilde{\tauxreaction{}}(\tilde{\theta}(\tilde{x})) ~\geq~ 0
  \end{equation}

The associated boundary conditions are:\newline
%
  \begin{minipage}{0.4\textwidth}
    \begin{curlyeqset}{1}{1pt}
    &\tilde{\theta}(-\infty) = 0 \label{eq:adim:BCs:minf}\\
    &\tilde{\theta}(0^-) = \tilde{\theta}(0^+) = \tilde{\theta}_s(\tilde{c}) \label{eq:adim:BCs:Tinterface}\\
    &\tilde{\gamma}(0^+) - \eta \tilde{\gamma}(0^-)  = \tilde{S}(\tilde{c})  \label{eq:adim:BCs:bilanInterface}\\
    &\tilde{\theta}(+\infty) = 1  \label{eq:adim:BCs:pinf}
    \end{curlyeqset}
  \end{minipage}
  \begin{minipage}{0.5\textwidth}
    \begin{curlyeqset}{1}{1pt}   
    &\tilde{\gamma}(-\infty) = 0 \label{eq:adim:BCs:gradminf}\\
    &\tilde{\gamma}(+\infty) = 0 \label{eq:adim:BCs:gradpinf}
    \end{curlyeqset}
  \end{minipage}

\vspace{2em}

  with the \nomS{}:
  \begin{equation}
   \label{eq:definition:S}
   \tilde{S}(\tilde{c}) = \eta\dfrac{\Qpyro}{\Qpyro + \Qgas} \tilde{c}
  \end{equation}


\paragraph*{\textbf{ \textit{ Key steps to obtain the dimensionless system} } }
\noindent
For the solid phase, we take equation \eqref{eq:onde:solid}, divide it by $(T_f - T_0)$ to let $\theta$ appear,
and switch the spatial derivatives from $x$ to $\tilde{x}$ ($L \derivshort{}{x} = \derivshort{}{\tilde{x}}$);
we then multiply it by $L^2/D_s$ and use the definition of $\gamma$ to obtain \eqref{eq:adim:solid}. For the gas phase, Proposition \ref{prop:bilan:enthalpy}
allows us to \modif{express $Y$ as a function of $T$}. 
Therefore the reaction rate $\tauxreaction{}(T,Y)$ can be expressed as a function of temperature only
$\tilde{\tauxreaction{}}(\tilde{\theta})$.
Equation \eqref{eq:adim:gas} is then obtained from \eqref{eq:onde:energy} in a similar fashion as for the solid phase.
Equation \eqref{eq:adim:BCs:bilanInterface} can be obtained from equation \eqref{eq:onde:BCs:bilanInterface} after the same kind of process, with
$\tilde{S}(\tilde{c}) = c {L \rho_s \Qpyro(T_s)}/({\lambda_g (T_f - T_0)}) = \tilde{c} {\rho_s D_s \Qpyro(T_s)}/({\lambda_g(T_f - T_0)})$.
Recalling the definitions of $\eta$, $D_s$, we obtain:
\begin{equation}
 \label{eq:definition:Scpcs}
 \tilde{S}(\tilde{c}) = \dfrac{\eta \Qpyro(T_s)}{c_s(T_f - T_0)}\tilde{c}
\end{equation}

Using H\ref{assum:cpcs} and the global energy balance from Proposition \ref{prop:bilan:T}, we get \eqref{eq:adim:BCs:bilanInterface}
and \eqref{eq:definition:S}. All the other boundary conditions are directly \modif{obtained from the ones of the} simplified travelling wave model. 

\begin{remark}
\label{remark:impactC}
For a given value of $\tilde{c}$, $\tilde{\theta}_s$ is given by \eqref{eq:modifiedPyrolysisLaw}.
Therefore the first-order ODEs \eqref{eq:adim:relation_theta_gamma} and \eqref{eq:adim:solid} can be integrated from $\tilde{x}=-\infty$ to $0$, using the boundary conditions
\eqref{eq:adim:BCs:minf} and \eqref{eq:adim:BCs:Tinterface}, and the solution profiles for $\tilde{\theta}$ and $\tilde{\gamma}$ are unique.
Similarly the first-order ODEs \eqref{eq:adim:relation_theta_gamma} and \eqref{eq:adim:gas} may also be integrated from $\tilde{x}=+\infty$ to $0$, using the boundary conditions
\eqref{eq:adim:BCs:pinf} and \eqref{eq:adim:BCs:Tinterface}, and the solution profiles are also unique.
Boundary conditions \eqref{eq:adim:BCs:gradminf} and \eqref{eq:adim:BCs:gradpinf} are only introduced to emphasise the behaviour of the system at infinity, however they are not
mathematically required.
The difficulty arises from the interface thermal balance \eqref{eq:adim:BCs:bilanInterface} which overconstrains our system.
For a random value of $\tilde{c}$, it is likely that this condition will not be satisfied. However the dependence of this condition on $\tilde{c}$ through $\gamma$ and
the \nomS{} $\tilde{S}$ allows us to envision that some specific values of $\tilde{c}$ might lead to this condition being verified (hence the name ``target'' for $\tilde{S}$).
Therefore, the \modif{dimensionless regression velocity}    
  $\tilde{c}$ is a key variable and can be considered as an ``eigenvalue'' of the problem.
\end{remark}

\begin{remark}
\label{remark:psi}
  The dimensionless heat source term $\tilde{\Psi}$ has the same behaviour as the reaction rate $\tilde{\tauxreaction{}}$.
  It is positive for $\tilde{\theta} \in [0,1]$ and vanishes for $\tilde{\theta}=1$, since all the fuel is burned, i.e. $\tilde{\Psi}(1) = 0$.
\end{remark}

\begin{remark}
 The sign of the temperature gradient jump across the interface $\left[ \derivshort{T}{x}\right]_{0^-}^{0^+}$, or equivalently
 $\left[ \derivshort{\tilde{\theta}}{\tilde{x}}\right]_{0^-}^{0^+}$,
 depends on three factors:
 \begin{packed_list}
  \item $\eta = \lambda_s/\lambda_g$ the ratio of the thermal conductivities in the gas and in the solid
  \item $\Qpyro$ the reaction heat of the pyrolysis reaction, detailed in Remark \ref{remark:Qpyro}
  \item $T_s$, which is directly related to the regression rate
 \end{packed_list}
 
\noindent Let us underline that the presence of the ratio $\eta$ of thermal conductivities may have a strong impact on the sign of the jump.
 As an example, in a configuration where $\Qpyro = 0$, we have $\tilde{S}(\tilde{c})=0$.
 If $\eta>1$, then $\derivshort{T}{x}(0^+) > \derivshort{T}{x}(0^-)$, but if $\eta<1$, then $\derivshort{T}{x}(0^+) < \derivshort{T}{x}(0^-)$.
\end{remark}

\begin{remark}
\modif{With the reactant mass fraction $Y$ now removed from our set of variables, the species interface condition \eqref{eq:onde:BCs:bilanYInterface} is not considered any more. It can actually be shown that, as long as the thermal interface condition \eqref{eq:adim:BCs:bilanInterface} is satisfied, this condition is automatically fulfilled, even if $\mathrm{Le} \neq 1$.}
\end{remark}

\section{Existence and uniqueness of a travelling wave solution profile and velocity}

In this section, we will use the previously established dimensionless system to prove that there exists at least one value of the regression \modif{velocity}    
 $c$ such
that all boundary conditions can be satisfied and the complete travelling wave problem can be solved.
We then proceed to show that there is only one such value of $c$.
For the sake of simplicity, we drop the ``~$\widetilde{\cdot}$~'' notation.

\subsection{Monotonicity of the temperature profile}
A first step in our proof of existence is to show that the temperature profile is \modif{increasing} in the gas phase.
\begin{proposition}
\label{prop:monotonicity}
 \modif{The temperature $T$ is an increasing function of $x$ on $\mathbb{R}$.}
 \modif{There exists a bijection $g: \mathbb{R} \rightarrow [0,1]$ such that $g(x) = \theta$.}
\end{proposition}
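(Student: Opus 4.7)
The plan is to treat the solid and gas phases separately---each is governed by a linear first-order ODE in $\gamma$ with prescribed asymptotic behaviour---and then glue the two pieces together via the continuity of $\theta$ at the interface.

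For the solid phase, I would integrate \eqref{eq:adim:solid} explicitly: the general solution is $\gamma(x) = \gamma(0^-)\,e^{-cx}$, which, since $c<0$, automatically yields $\gamma(-\infty)=0$. Integrating once more and imposing $\theta(-\infty)=0$ together with $\theta(0^-) = \theta_s$ then pins down $\gamma(0^-) = -c\,\theta_s$. The pyrolysis law combined with H\ref{assum:modif_pyro} guarantees $\theta_s>0$, so $\gamma>0$ on $(-\infty,0)$ and $\theta$ is strictly increasing there.

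The nontrivial part is the gas phase, where my plan is to use an integrating-factor trick tailored to the sign of $\eta c$. Multiplying \eqref{eq:adim:gas} by $e^{\eta c x}$ gives
\begin{equation*}
    \bigl(e^{\eta c x}\gamma(x)\bigr)' \;=\; -\,e^{\eta c x}\,\Psi(\theta(x)) \;\leq\; 0
\end{equation*}
by Remark \ref{remark:psi}, so the auxiliary function $\phi(x) := e^{\eta c x}\gamma(x)$ is non-increasing on $(0,\infty)$. Because $\eta c<0$ while $\gamma$ remains bounded and tends to zero at $+\infty$ (condition \eqref{eq:adim:BCs:gradpinf}), we have $\phi(+\infty)=0$. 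A non-increasing function with zero limit at $+\infty$ is nonnegative, so $\gamma\geq 0$ throughout the gas phase, i.e.\ $\theta$ is non-decreasing there. To upgrade to strict monotonicity I would argue by contradiction: if $\gamma(x_0)=0$ at some finite $x_0>0$, then $\phi(x_0)=0$ and the monotonicity of $\phi$ forces $\gamma\equiv 0$ on $[x_0,\infty)$, so $\theta$ is constant on that interval, necessarily equal to $\theta(+\infty)=1$. But $(\theta,\gamma)=(1,0)$ is an equilibrium of the smooth planar system $\theta'=\gamma$, $\gamma'=-\eta c\gamma-\Psi(\theta)$, and Cauchy--Lipschitz uniqueness backward in $x$ then forces the whole trajectory to be $(1,0)$, contradicting $\theta(0^+)=\theta_s<1$. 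Hence $\gamma>0$ strictly on $(0,\infty)$, so $\theta$ is strictly increasing on the gas side; combining with the solid-phase monotonicity, the continuity \eqref{eq:adim:BCs:Tinterface} at the interface, and the limits $\theta(-\infty)=0$, $\theta(+\infty)=1$, yields a continuous strictly increasing map $g:\mathbb{R}\to[0,1]$ with the prescribed end-values, hence a bijection.

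The main obstacle is the gas-phase step; the rest is bookkeeping. The crucial observation is that the sign combination $\eta c<0$ (recall $\eta>0$ is a material ratio and $c<0$ is the physical regression velocity) is precisely what makes the integrating factor $e^{\eta c x}$ decay at $+\infty$ and match the decay of $\gamma$, allowing the ODE to be rewritten as a monotone quantity. The promotion to strict monotonicity depends on $\Psi>0$ on $[0,1)$ (Remark \ref{remark:psi}); without this, $\theta$ could in principle plateau at an intermediate value without contradiction.
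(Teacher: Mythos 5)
Your proof is correct, but it reaches the gas-phase monotonicity by a genuinely different mechanism than the paper. On the solid side you integrate \eqref{eq:adim:solid} explicitly and read off $\gamma(0^-)=-c\,\theta_s>0$; the paper instead runs a small contradiction argument on local extrema (though it uses the same explicit solution $\gamma=-c\theta$ later, so the two are interchangeable). The real divergence is in the gas phase: the paper assumes $\gamma(x_0)=0$, uses \eqref{eq:adim:gas} to force $x_0$ to be a strict local maximum, produces a subsequent local minimum $x_1$, and integrates between the two to get a sign contradiction ($\eta c[\theta(x_1)-\theta(x_0)]>0$ versus $-\int_{x_0}^{x_1}\Psi\,dx<0$) --- an extremum-counting argument that the authors explicitly flag as an instance of the maximum principle \cite{berestycki1994}. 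You instead exhibit the monotone auxiliary quantity $e^{\eta c x}\gamma$, obtain $\gamma\geq 0$ from its decay at $+\infty$, and upgrade to strict positivity via the equilibrium $(1,0)$ (using $\Psi(1)=0$ from Remark \ref{remark:psi}) together with backward Cauchy--Lipschitz uniqueness. Your route buys something concrete: it sidesteps the step the paper glosses over, namely that a finite local minimum $x_1$ actually exists (which tacitly needs $\theta(+\infty)=1$ and boundedness of the profile), and it makes explicit exactly where strictness comes from. The price is that you need $\Psi$ locally Lipschitz for the backward-uniqueness step, whereas the paper's sign argument needs only $\Psi>0$ on $[0,1)$ and no regularity; your use of \eqref{eq:adim:BCs:gradpinf} is also slightly stronger than needed (boundedness of $\gamma$ alone gives $e^{\eta c x}\gamma\to 0$, which matters since the paper notes the gradient conditions at infinity are not mathematically required). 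Two shared loose ends, not gaps relative to the paper: both proofs tacitly assume $\theta_s<1$ (the paper via ``$\theta(+\infty)>\theta(0^+)$'', you in the final contradiction), and both state the bijection onto $[0,1]$ although the endpoint values are attained only as limits, so strictly the image of $g$ on $\mathbb{R}$ is the open interval $(0,1)$.
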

\begin{proof}

This proposition is established by considering the behaviour of the temperature in the two phases successively.

{\bf Solid phase}
Let \modif{$x_0 \in (-\infty, 0)$} 
the position of a local extremum for $\theta$: $\gamma(x_0)=0$. From equation \eqref{eq:adim:solid} we get  $\gamma'(x_0)=0$.
If we integrate equation \eqref{eq:adim:relation_theta_gamma} from $-\infty$ to $x_0$, we get $\theta(x_0)=0$. Therefore no local extremum can be lower than $0$.
If a local maximum exists at $x_0$, $\theta(x_0)=0$. As $\theta(-\infty)=0$, there would then exist a local minimum $x_1<x_0$,
and we must have $\theta(x_1)<0$, which contradicts our previous finding.
Therefore no local extremum exists for $\theta$ in the solid phase.
As $c<0$ implies $\theta(0) = \theta_s>0$ (see equation \eqref{eq:modifiedPyrolysisLaw}), we can conclude that $\theta$ is \modif{increasing} 
in this phase.

{\bf Gas phase}
We want to prove that the temperature profile is monotonous and increasing in the gas phase.
Using a \textit{reductio ad absurdum}, \modif{let us} suppose that $\exists ~ x_0 ~/~ \gamma(x_0)=0$, local extremum or inflection point for $\theta$.
The energy equation \eqref{eq:adim:gas} then reads: $\theta''(x_0) = \gamma'(x_0) = -\Psi(\theta(x_0)) ~ < ~ 0$, which means that $x_0$ can only be a local maximum.
Consequently there exists a local minimum at \modif{$x_1 \in (x_0, +\infty)$} 
such that $\gamma(x_1)=0$ and $\gamma(x)<0 ~ \forall~ x \in (x_0, x_1)$.
We obviously have $\theta(x_1)<\theta(x_0)$. Integrating equation \eqref{eq:adim:gas} from $x_0$ to $x_1$ yields:
 $\eta c [\theta(x_1) - \theta(x_0)] = -\int_{x_0}^{x_1} \Psi(\theta(x)) dx$.
The left-hand side is strictly positive, but the right-hand side is strictly negative, consequently there exists no local maximum $x_0$.
Overall, $\theta$ does not have any local extremum in the gas phase, and as $\theta(+\infty) > \theta(0^+)$, we can conclude that
$\theta$ is monotonous and increasing in the gas phase.
This proof is the consequence of a much more general principle in the study of second order elliptic equations called the maximum principle \cite{berestycki1994}.

{\bf Overall Monotonicity}
The boundary condition $\theta(0^+) = \theta(0^-) = \theta_s$ and the requirement $\theta_s>0$ allow us to prove that
$\theta$ is increasing and strictly monotonous across both phases.
\modif{Therefore, we can build a bijection $g: \mathbb{R} \rightarrow [0,1]$ such that $g(x) = \theta$.} 
This proof is valid even if the regression \modif{velocity}    
 $c$ is such that the interface thermal balance \eqref{eq:adim:BCs:bilanInterface} is not satisfied.
\end{proof}
\vspace{.5cm}

We now make use of the monotonicity of $\theta$ to switch from a spatial point of view to a phase space one. The  bijection between $\theta$ and $x$ allows for
a variable change from $x$ to $\theta$ in our equations. Therefore, $\gamma$ may be considered a function of $\theta$. We also have the relation
$\derivshort{\gamma}{x} = \derivshort{\gamma}{\theta} \derivshort{\theta}{x} = \gamma\derivshort{\gamma}{\theta}$.
We can transform the dimensionless system into the following one, which we will use to
determine the orbit of our system in the phase plane $(\theta, \gamma)$.

\paragraph*{Reduced dynamical system for orbit evaluation}
  The dimensionless system 
  is equivalent to the following set of first-order ODEs and boundary conditions:
  
    \begin{curlyeqset}{1}{1pt}
     &c\gamma(\theta) ~~+ \gamma(\theta)\derivshort{\gamma}{\theta}(\theta) = 0 ~& \forall~ \theta \in [0, \theta_s(c)] \label{eq:portrait:solid}\\ 
     &\eta c \gamma(\theta) + \gamma(\theta)\derivshort{\gamma}{\theta}(\theta) = - \Psi(\theta) &\forall~ \theta \in [\theta_s(c), 1] \label{eq:portrait:gas}
    \end{curlyeqset}
   \begin{curlyeqset}{1}{1pt}
      &\gamma(0)=0 \label{eq:portrait:BC_minf}\\
      &\gamma(1)=0 \label{eq:portrait:BC_pinf}\\
      &\gamma(\theta_s^+) - \eta \gamma(\theta_s^-) = S(c) \label{eq:portrait:BC_interface}
   \end{curlyeqset}

\begin{remark}
This set of equations is similar to the one obtained by Zeldovich et al. \cite{livreZeldovich} for a homogeneous gaseous laminar flame. 
In this reference, the phase portrait of the temperature profile is also split in two parts.
The first one represents the part of the profile where the temperature is lower than an artificial cut-off temperature $\theta_{ignition}$,
below which the reaction rate $\Psi$ is forced to zero. This allows the ``cold boundary'' problem
\cite{coldBoundaryKarman1953,coldBoundaryMarble1956,coldBoundary1991} to be overcome.
The zone where $\theta<\theta_ {ignition}$ is purely a convection-diffusion zone.
The second part of the laminar flame phase portrait is the same as ours: the gas phase undergoes a reaction which creates a steep increase
in temperature before reaching the adiabatic combustion temperature behind the combustion wave. This is a convection-diffusion-reaction zone.
The two zones are joined using the continuity of the temperature profile and its gradient, as no reaction or heat accumulation takes place at the interface.
In our case, the first part of the phase portrait is not associated with a cut-off of the gas phase reaction rate, but with the fact that the solid phase is inert,
therefore it only heats up through thermal diffusion. Our problem thus differs in two ways from the laminar flame one. First, the pyrolysis process is concentrated
at the interface and causes a discontinuity of the temperature gradient, which depends on the wave \modif{velocity} $c$.
Secondly, the position $\theta_s$ of the interface in the phase portrait also varies with $c$, whereas $\theta_{ignition}$  is an arbitrary constant.
We can artificially make our problem equivalent to the laminar flame's one by forcing $\Qpyro=0$, $\eta=1$, $\theta_s = \theta_{ignition}$ (no pyrolysis law), $D_s=D_g$
(and $c_s=c_p$ as assumed in H\ref{assum:cpcs}).
\end{remark}

The rest of the study will be based on the analysis of the phase portrait of the system, i.e. the plot of $\gamma$ versus $\theta$.
Such a phase portrait is represented in Figure \ref{fig:schemPhasePortrait}.

\begin{figure}
\begin{minipage}[t]{0.425\textwidth}
\includegraphics[width=1.2\textwidth, trim={3cm 2cm 2cm 2cm},clip]{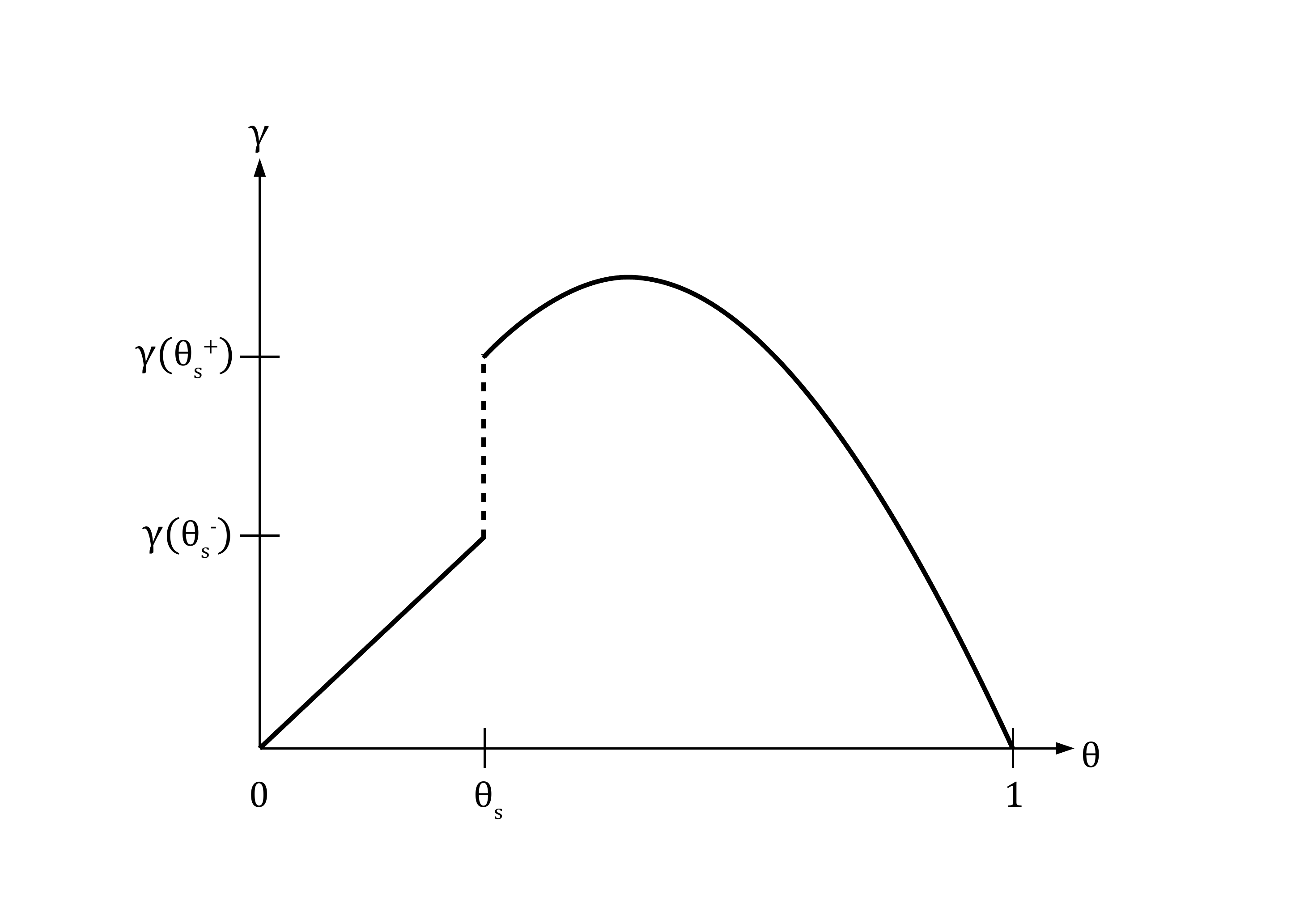}
\captionof{figure}{\label{fig:schemPhasePortrait} Schematic phase portrait in both phases}
\end{minipage}
\begin{minipage}[t]{0.05\textwidth}
~
\end{minipage}
\begin{minipage}[t]{0.425\textwidth}
\includegraphics[width=1.2\textwidth, trim={0cm 1cm 0cm 0cm},clip]{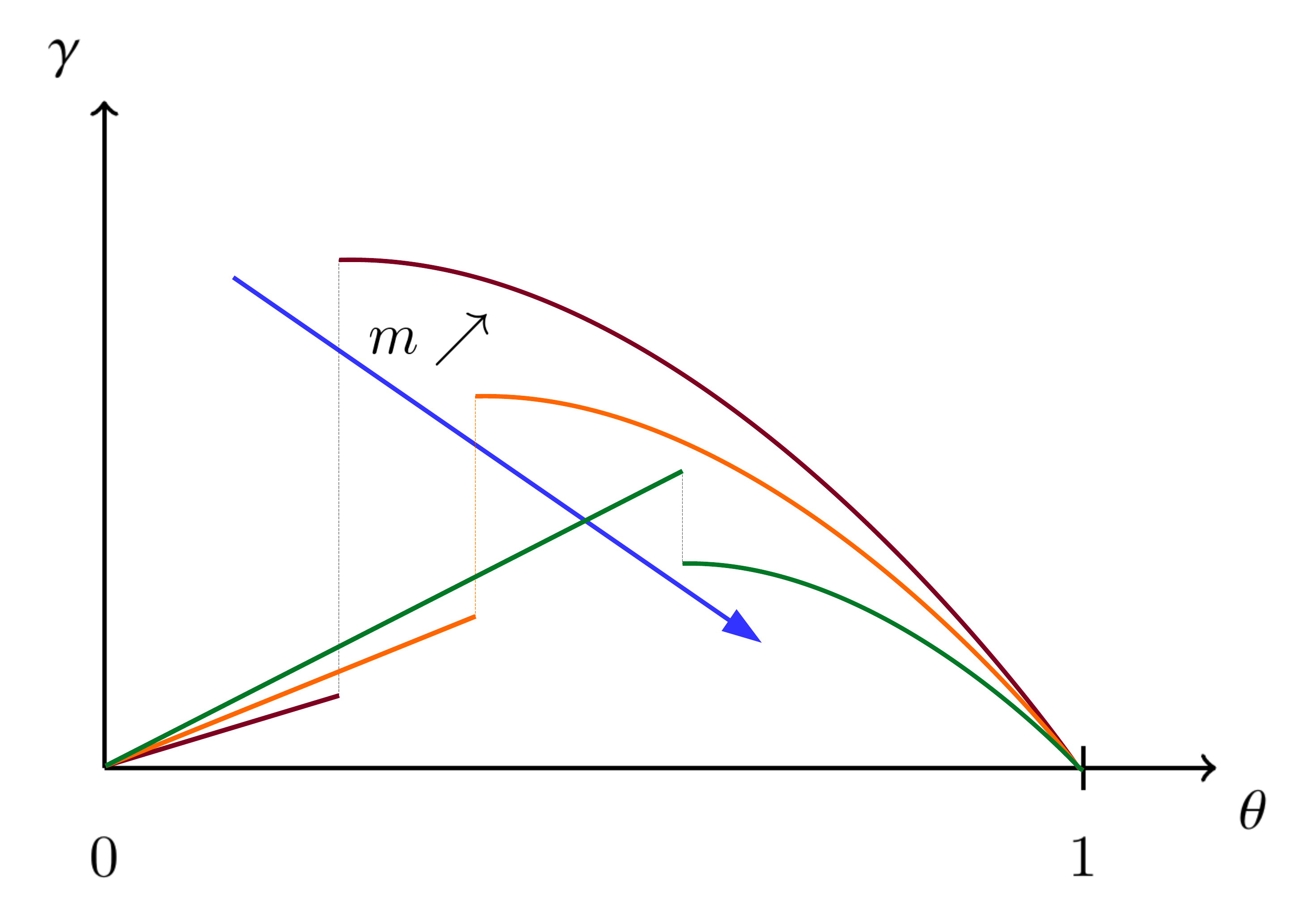}
\captionof{figure}{\label{fig:evoPortraitGlobal} Evolution of the phase portrait with $\massflux$.
Each curve corresponds to the solution curve for one value of $\massflux$.}
\end{minipage}
\end{figure}

\subsection{Existence of a solution}
We will now show that there exists at least one wave \modif{velocity}    
 $c<0$ such that the travelling wave problem previously stated has a solution for fixed values of $P$ and $T_0$.
We introduce $\Delta \gamma (c) = \gamma\left[\theta_s^+(c)\right] - \eta \gamma\left[\theta_s^-(c)\right]$, the \nomDG{} \modif{obtained for the regression velocity $c$} 
and $\xi(c) = \Delta \gamma (c) - S(c) $, which we will call the \nomXi{}. We introduce another assumption, non-restrictive for any real application:
\begin{assumption}
\label{assum:Qpyro_range}
The heat of the pyrolysis reaction $\Qpyro$ is such that $\Qpyro > -\Qgas$.
\end{assumption}

\begin{proposition}
\label{prop:existenceSolution}
Under Assumptions H\ref{assum:modif_pyro} and H\ref{assum:Qpyro_range},
there exists at least one wave \modif{velocity}    
 $c$ such that the problem stated in the reduced system 
can be solved.
All solutions for the wave \modif{velocity}    
 $c$ reside in the interval $(\cmax, 0)$, with $\cmax$ the dimensionless wave \modif{velocity} such that $\theta_s(\cmax)=1$.
 \end{proposition}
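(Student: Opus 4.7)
The plan is to reduce existence to a one-variable root-finding problem for the interface balance mismatch $\xi(c)$ and to conclude by the intermediate value theorem on $(\cmax, 0)$. The argument proceeds in four stages.

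First, for a fixed $c \in (\cmax, 0)$, I would construct the two halves of the orbit in the $(\theta,\gamma)$ phase plane. In the solid branch, \eqref{eq:portrait:solid} together with $\gamma(0)=0$ admits the explicit non-trivial solution $\gamma(\theta) = -c\theta$ on $[0, \theta_s(c)]$, so in particular $\gamma(\theta_s^-(c)) = -c\,\theta_s(c) > 0$. In the gas branch, I would integrate \eqref{eq:portrait:gas} backwards from $\theta = 1$ with $\gamma(1)=0$: the vector field $-\eta c - \Psi(\theta)/\gamma$ is singular at this endpoint, but since $\Psi(1)=0$ (Remark \ref{remark:psi}) and $\Psi \geq 0$ nearby, a local existence and uniqueness argument, either via linearisation around $(1,0)$ or via the change of unknown $\gamma^2/2$, yields a unique positive trajectory that can then be continued down to $\theta_s(c)$. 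Positivity along the whole backward orbit is preserved by the same maximum-principle style reasoning as in Proposition \ref{prop:monotonicity}.

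Second, I would define the effective interface balance $\Delta\gamma(c) = \gamma(\theta_s^+(c)) + \eta c\,\theta_s(c)$ and the mismatch $\xi(c) = \Delta\gamma(c) - S(c)$. Continuity of $\xi$ on $(\cmax, 0)$ follows from continuity of $\theta_s$ and $S$ in $c$, guaranteed by the smoothness of the pyrolysis law and of the cut-off $\phi$ (Assumptions H\ref{assum:interface} and H\ref{assum:modif_pyro}), together with classical continuous dependence of the gas-phase orbit on the parameter $c$ and on the matching abscissa $\theta_s(c)$. I would then evaluate $\xi$ at the two endpoints. At $c = \cmax$, one has $\theta_s(\cmax) = 1$, so the gas-phase domain collapses to $\{1\}$ and $\gamma(\theta_s^+) = \gamma(1) = 0$, giving $\Delta\gamma(\cmax) = \eta\,\cmax$; combined with $S(\cmax) = \eta\,\frac{\Qpyro}{\Qpyro + \Qgas}\,\cmax$, this yields
\[
\xi(\cmax) \;=\; \eta\,\cmax\,\frac{\Qgas}{\Qpyro + \Qgas},
\]
which is strictly negative since $\cmax < 0$, $\eta > 0$, $\Qgas > 0$, and $\Qpyro + \Qgas > 0$ by H\ref{assum:Qpyro_range}. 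As $c \to 0^-$, Assumption H\ref{assum:modif_pyro} forces $\theta_s(c) \to 0^+$, hence $\gamma(\theta_s^-) \to 0$ and $S(c) \to 0$; integrating the limiting gas-phase equation $\gamma\gamma' = -\Psi(\theta)$ from $\theta=1$ down to $\theta=0$ gives $\gamma(0^+)^2 = 2\int_0^1 \Psi(\theta)\,d\theta > 0$, so $\xi(c) \to \sqrt{2\int_0^1 \Psi(\theta)\,d\theta} > 0$.

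Finally, the intermediate value theorem applied to $\xi$ on $(\cmax, 0)$ produces at least one $c^* \in (\cmax, 0)$ with $\xi(c^*) = 0$, which is the sought travelling wave velocity. The a priori confinement of every admissible solution to this interval is immediate: for $c \geq 0$ no pyrolysis occurs (H\ref{assum:modif_pyro}) and no wave can exist, while $c \leq \cmax$ would force $\theta_s \geq 1$, contradicting the monotonicity of $\theta$ established in Proposition \ref{prop:monotonicity}. The main obstacle I expect is precisely the careful handling of the degenerate terminal point $(\theta,\gamma) = (1,0)$ of the gas-phase Cauchy problem, namely proving that a unique positive backward trajectory exists there and depends continuously on $c$ up to $\theta = \theta_s(c)$; once this is secured, the remaining stages reduce to explicit algebra at the endpoints and a direct application of IVT.
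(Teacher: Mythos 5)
Your proposal is correct and follows essentially the same route as the paper: you reduce existence to a sign change of the mismatch $\xi(c)=\Delta\gamma(c)-S(c)$, evaluate the same two limit cases ($c\to 0^-$ giving $\xi\to\sqrt{2\int_0^1\Psi\,d\theta}>0$, and $c=\cmax$ giving $\xi(\cmax)=\eta\,\cmax\,\Qgas/(\Qpyro+\Qgas)<0$ under H\ref{assum:Qpyro_range}), and conclude by the intermediate value theorem. The only cosmetic difference is that the paper establishes continuity of $\xi$ concretely by appending $c$ to the phase-space state (with $\derivshort{c}{\theta}=0$) and invoking smooth dependence of the flow on initial conditions, and it defers the treatment of the degenerate point $(1,0)$ to its linearisation in Section \ref{subsection:point_crit}, exactly the two steps you flagged as needing care.
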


\begin{proof}
\noindent
The global phase portrait in the gas and solid phases is schematically represented in Figure \ref{fig:schemPhasePortrait}.
There is a jump of $\gamma$ at $\theta_s$, as explained in Remark \ref{remark:impactC}.
More precisely, the thermal boundary condition \eqref{eq:portrait:BC_interface} may be reformulated as:
$$
\Delta \gamma (c) = S(c)  ~~\Leftrightarrow~~ \xi(c) = 0
$$
with $\Delta\gamma$ the \nomDG{}, i.e. the dimensionless interface heat fluxes balance we obtain for a given value of $c$
by integrating equations \eqref{eq:portrait:solid} and \eqref{eq:portrait:gas} separately,
with boundary conditions \eqref{eq:portrait:BC_minf} and \eqref{eq:portrait:BC_pinf} respectively.
The \nomXi{} $\xi$ is non-zero when the interface thermal balance condition \eqref{eq:portrait:BC_interface} is not satisfied.
A complete solution to the reduced system may only be found if there exists a regression \modif{velocity} $c$ such that $\xi(c)=0$.

To prove the existence of such a value of $c$, we focus on the behaviour of $\xi$. We first aim at proving that $\xi$ is a continuous function of $c$.
To do so, we add $c$ as a variable in  our reduced system, subject to $\derivshort{c}{\theta}=0$ with the boundary condition $c(0)=c_0$ determined from
the pyrolysis law \eqref{eq:modifiedPyrolysisLaw}.
The reduced system in the solid phase can be recast to the following form:
\begin{center}
$\dfrac{d}{d\theta} {\begin{pmatrix} \gamma\\ c \end{pmatrix}} = f \begin{pmatrix} \gamma\\ c \end{pmatrix}$,
with $f \begin{pmatrix} \gamma\\ c \end{pmatrix} = \begin{pmatrix}  -c\\ 0\end{pmatrix}$
and initial conditions $\begin{pmatrix} \gamma(0)\\ c(0) \end{pmatrix} = \begin{pmatrix} \gamma_0\\ c_0 \end{pmatrix} = \begin{pmatrix} 0\\ -\massflux/\rho_s \end{pmatrix}$.
\end{center}

The associated flow is $\phi : (\theta; \gamma_0, c_0) \rightarrow (\gamma(\theta), c(\theta) )$. The theory of dynamical systems shows that, $f$ being here a 
$\mathcal{C}^\infty$ function, the flow is also $\mathcal{C}^\infty$ with respect to the initial conditions.
In particular, the solution profile for $\gamma$ in the solid phase depends continuously on $c_0=-\massflux/\rho_s$.
As we also assume (Assumption H\ref{assum:interface}) that the surface temperature $\theta_s$ is a $\mathcal{C}^\infty$ function of $c$,
$\gamma\left[\theta_s^-(c)\right]$ is $\mathcal{C}^\infty$ with respect to $c$.
The same reasoning can be applied to the gas phase for $\gamma\left[\theta_s^+(c)\right]$, so that $\Delta\gamma$ is $\mathcal{C}^\infty$.
$S$ is also trivially a $\mathcal{C}^\infty$ function of $c$. As $\xi$ is a sum of $\mathcal{C}^\infty$  functions of $c$, we conclude that
$\xi$ is $\mathcal{C}^\infty$ with respect to $c$.

Inspired by this property, we aim at finding two values of the wave \modif{velocity}    
 $c_1$ and $c_2$ such that $\xi(c_1)$ and $\xi(c_2)$ have opposite signs,
implying that there is at least one value of $c \in (c_1, c_2)$ such that $\xi(c)=0$.
We exhibit two limit cases for the wave \modif{velocity}    
 $c$, which naturally yield a different sign for $\xi$:
\begin{itemize}
 \item \textbf{Case $c=0$}: 
 In this case $\massflux=0$, i.e. the solid propellant remains inert. 
Using Assumption H\ref{assum:modif_pyro}, this equates to $\theta_s=0$. The temperature is uniform inside the solid phase.
Note that we are still satisfying the monotonicity property of $\theta$ shown in Proposition \ref{prop:monotonicity}
(the monotonicity is strict in the solid phase only if $\theta_s>0$). With $c=0$, Proposition \ref{prop:bilan:massflux} yields $u=0$.
Equations \eqref{eq:definition:S}, \eqref{eq:portrait:solid} and \eqref{eq:portrait:gas} lead to:
\mbox{$\gamma(\theta_s^-(0)) = 0$},
\mbox{$\gamma(\theta_s^+(0)) = ({ 2 \int_0^1 \Psi(y) dy })^{1/2} =  ({2 I_0})^{1/2}$ and $S(0) = 0$.}
Consequently $\Delta \gamma (0) = ({2 I_0})^{1/2} > S(0)$, and therefore $\xi(0) > 0$.

\item \textbf{Case $c=\cmax$}: 
The solution we are looking for is monotonous and thus requires $\theta_s \leq 1$.
Based on the pyrolysis law \eqref{eq:modifiedPyrolysisLaw}, the case $\theta_s=1$ corresponds to a certain value $\cmax<0$ of the wave \modif{velocity}.    
We can then directly integrate the reduced system equations \eqref{eq:portrait:solid} and \eqref{eq:portrait:gas} to obtain:
\mbox{$\gamma(\theta_s^-) = -\cmax$,}
\mbox{$\gamma(\theta_s^+) = 0$}
and 
\mbox{$S(c) = \eta\,\cmax\,\Qpyro/ (\Qpyro + \Qgas) $.}
Thus, $\xi(\cmax) = \Delta\gamma(\cmax) - S(\cmax) = \eta \cmax \left( 1 - \Qpyro/(\Qpyro+\Qgas) \right) = \eta\,\cmax\,\Qgas/(\Qpyro + \Qgas)$.
Assuming  H\ref{assum:Qpyro_range}, we obtain $\xi(\cmax) < 0$.
\end{itemize}

In realistic cases for $\Qpyro$, we have shown that $\xi(0) > 0$ and $\xi(\cmax) < 0$. Therefore, as $\xi$ is a continuous function of $c$,
there exists at least one value of $c \in (\cmax, 0)$ such that $\xi(c) = 0$.
Potential solutions with $c>0$ or $c<\cmax$ are physically meaningless and are not further considered.
The existence of a solution for the reduced system implies that a solution also exists for the dimensionless system and the simplified travelling wave system.
\end{proof}

\subsection{Uniqueness of the solution}

Having proved that there exists at least one value of $c$ such that the travelling wave problem can be solved, we now proceed to show that there is only one such value.
There are two cases, depending on the sign of $\Qpyro$.

\begin{proposition}
\label{prop:uniquenessQp_negatif}
  If $~\Qpyro<0$, there exists a unique value of the wave \modif{velocity}    
 $c$ such that the reduced system has a solution.
\end{proposition}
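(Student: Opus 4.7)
The plan is to establish uniqueness by showing that the \nomXi{} $\xi$ is strictly monotonic in $c$ on $(\cmax, 0)$. Since Proposition \ref{prop:existenceSolution} together with the continuity of $\xi$ already gives $\xi(\cmax) < 0 < \xi(0)$, strict monotonicity will immediately yield exactly one zero.

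First, I would integrate the solid-phase equation \eqref{eq:portrait:solid} explicitly: with $\gamma(0) = 0$, one obtains $\gamma_s(\theta; c) = -c\theta$ on $[0,\theta_s(c)]$, hence $\gamma(\theta_s^-(c)) = -c\,\theta_s(c)$. Denoting by $\gamma_g(\theta; c)$ the gas-phase solution of \eqref{eq:portrait:gas} with $\gamma_g(1; c) = 0$, the interface mismatch reads
$$
\xi(c) \;=\; \gamma_g(\theta_s(c); c) \;+\; \eta\,c\,\theta_s(c) \;-\; \eta \frac{\Qpyro}{\Qpyro+\Qgas}\,c.
$$

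Next I would analyse the sensitivity $w_c := \partial_c \gamma_g$. Differentiating the phase-space equation $\partial_\theta \gamma_g = -\eta c - \Psi(\theta)/\gamma_g$ with respect to $c$ yields the linear ODE $\partial_\theta w_c - (\Psi/\gamma_g^2)\,w_c = -\eta$; solving via the integrating factor $\mu(\theta) = \exp\!\left(-\int_{\cdot}^{\theta}\Psi/\gamma_g^2\,ds\right)$ produces
$$
w_c(\theta; c) \;=\; \frac{\eta}{\mu(\theta)}\int_\theta^{1}\mu(s)\,ds \;>\; 0 \qquad \text{on } (\theta_s(c), 1).
$$
Combining the chain rule with the identity $\partial_\theta \gamma_g(\theta_s^+; c) = -\eta c - \Psi(\theta_s)/\gamma_g(\theta_s^+; c)$ then gives
$$
\xi'(c) \;=\; -\frac{\Psi(\theta_s(c))}{\gamma_g(\theta_s^+; c)}\,\theta_s'(c) \;+\; w_c(\theta_s(c); c) \;+\; \eta\!\left[\theta_s(c) - \frac{\Qpyro}{\Qpyro+\Qgas}\right].
$$
Assumption H\ref{assum:interface}, together with $\massflux = -\rho_s c$, yields $\theta_s'(c) < 0$, so the first term is nonnegative (since $\Psi \geq 0$ and $\gamma_g > 0$ in the gas phase by Proposition \ref{prop:monotonicity}); the second term is strictly positive by the sensitivity computation; and when $\Qpyro < 0$, Assumption H\ref{assum:Qpyro_range} gives $\Qpyro/(\Qpyro+\Qgas) < 0 \leq \theta_s(c)$, so the bracketed term is strictly positive. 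Thus $\xi'(c) > 0$ on $(\cmax, 0)$, and the unique zero of $\xi$ follows.

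The main obstacle is the rigorous justification of the sensitivity formula near $\theta = 1$, where both $\gamma_g$ and $\Psi$ vanish and the equation $\partial_\theta \gamma_g = -\eta c - \Psi/\gamma_g$ is therefore singular. Using the expansions $\gamma_g \sim |\gamma_g'(1; c)|(1-\theta)$ and $\Psi(\theta) \sim -\Psi'(1)(1-\theta)$ as $\theta \to 1^-$, one can verify that $\mu(\theta) \sim K(1-\theta)^\beta$ for some $\beta > 0$, which forces $\mu(\theta)\,w_c(\theta) \to 0$ as $\theta \to 1^-$; this boundary behaviour is precisely what legitimises the integration of $(\mu w_c)' = -\eta \mu$ up to $1$ and secures positivity of $w_c$ on the full interval.
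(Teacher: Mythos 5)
Your proof is correct and takes essentially the same route as the paper: the same mismatch function $\xi$, the same sensitivity computation for $\partial_c \gamma_g$ (your integrating-factor solution is exactly the Zeldovich formula the paper quotes, with $\mu = \exp(-\rchi)$ after his substitution $y=-\gamma/\eta$), the same chain-rule cancellation of the $\eta c$ terms, and the same sign analysis of the three remaining terms. Your closing paragraph justifying the sensitivity formula at the singular endpoint $\theta = 1$ is a genuine (and welcome) sharpening of a step the paper disposes of by citation, but it does not change the argument's structure.
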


\begin{proof}
\label{proof:uniquenessQp_negatif}
\noindent
Studying the existence of a solution in Proposition \ref{prop:existenceSolution}, we have introduced the \nomXi{} $\xi(c) = \Delta\gamma(c) - S(c)$.
A solution to the reduced system with regression \modif{velocity}    
 $c$  only exists if $\xi(c)=0$.
We have shown that $\xi$ undergoes a change of sign between $c=0$ and $c=\cmax$. This implies that there exists at least one value of $c$ such that $\xi(c)=0$.
As we aim at proving that there is only one such value of $c$, we need to show that $\xi$ is a monotonous function of $c$.
To do so, we will study separately the evolution of the two terms appearing in the definition of $\xi$: $\Delta\gamma$ and $S$.
For improved readability, we introduce $\gamma^- = \gamma(\theta_s^-, c)$ and $\gamma^+ = \gamma(\theta_s^+, c)$.

\subparagraph*{\it Evolution of $\Delta\gamma$}
We have $\Delta\gamma = \gamma^+ - \eta\gamma^-$. To study the evolution $\Delta\gamma$,
we will first analyse the behaviour of $\gamma^-$ and $\gamma^+$.

In the solid phase, we have seen that we may solve equation \eqref{eq:portrait:solid} analytically and find $\gamma^- = - c\theta_s(c)$.
Deriving with respect to $c$, we obtain:
$$\derivshort{\gamma^-}{c} = -\theta_s(c) - c\partialdershort{\theta_s(c)}{c}$$

The Assumption H\ref{assum:interface} on the pyrolysis law \eqref{eq:modifiedPyrolysisLaw} implies $\derivshort{\massflux}{T_s} > 0$, hence
$\derivshort{T_s}{\massflux} > 0$ and consequently $\derivshort{\theta_s}{c} < 0$.
Therefore $\partialdershort{\gamma^-}{c} < 0$, i.e. that is the more $c$ diminishes ($\massflux$ increases),
the more thermal power is needed to maintain the solid phase temperature profile, as we would expect.

We now focus on the evolution of $\gamma^+$.
In the gas phase, the dimensionless temperature gradient at the interface is given by integrating equation \eqref{eq:portrait:gas}:
$$\gamma^+ = \int_1^{\theta_s(c)} \derivshort{\gamma}{\theta} ~ d\theta
	   = \int_1^{\theta_s(c)} \left( \dfrac{-\Psi(\theta)}{\gamma(\theta)} - \eta c \right) d\theta
$$
Deriving this expression with respect to $c$ yields:
$$\derivshort{\gamma^+}{c} =
    \underbrace{\int_1^{\theta_s(c)} \derivshort{ ( \derivshort{\gamma}{\theta} ) }{c} d\theta}_{A = \left( \partialder{\gamma^{\modif{+}}}{c}\right)_{\theta_s} (\theta_s, c)}
    ~+~
    \underbrace{\left( \dfrac{-\Psi(\theta_s)}{\gamma^+} - \eta c \right) \derivshort{\theta_s}{c}}_{B = \left(\partialder{\gamma^{\modif{+}}}{c}\right)_{c} (\theta_s, c)}
$$   

Let us study the sign of $A$ and $B$.
The term $A$ is the derivative of $\gamma^+$ with respect to $c$ at constant $\theta_s$.
Its sign may be found by following the same reasoning as Zeldovich in his work on laminar flames \cite{livreZeldovich}, which we reproduce hereafter for the
sake of completeness.
Deriving equation \eqref{eq:portrait:gas} with respect to $c$, we obtain:
\newcommand{\tempVar}{\left( \dfrac{\gamma}{-\eta} \right)}
$\partialdershort{( \partialdershort{\gamma}{\theta} )}{c} =
-\eta + ({\partialdershort{\gamma}{c}}/{\gamma^2})\Psi(\theta)$.
\renewcommand{\tempVar}{ y } 
Introducing $y=-{\gamma}/{\eta}$ and $\Pi(\theta) = {\Psi(\theta)}/{\eta^2}$ yields
\mbox{$\partialdershort{( \partialdershort{ \tempVar }{\theta} )}{c} =
1 + ({\partialdershort{ \tempVar}{c}}/{ \tempVar^2}) \Pi(\theta)$.}
Zeldovich has shown (\cite{livreZeldovich}, page 256) that the solution to this equation is:
$\quad \partialdershort{y}{c}(\theta) = - \exp(\rchi(\theta)) \int_{\theta}^1 \exp(-\rchi(z)) dz$, with $\rchi = \int {\Pi}/{y^2}$.
Therefore $\derivshort{y}{c}(\theta)<0, ~\forall \theta \in [0,1]$
and consequently \mbox{$A = -\eta \partialdershort{y}{c}(\theta_s) > 0$}.

Let us now determine the sign of $B$.
Based on Assumption H\ref{assum:gasphase}, we have $\Psi \geq 0$.
The monotonicity of the temperature profile in the gas phase implies $\gamma^+>0$, and Assumption H\ref{assum:interface} leads to $\derivshort{\theta_s}{c} < 0$.
Therefore $B$ is positive only if $\Psi(\theta_s)/{\gamma^+} > - \eta c$, which may not always be true,
thus we cannot directly conclude on the sign of $\derivshort{\gamma^+}{c}$.

However, if we combine the derivatives of $\gamma^+$ and $\gamma^-$ to express the derivative of $\Delta\gamma$, the terms containing $\eta c$ cancel out:
$$\derivshort{\Delta\gamma}{c} = \derivshort{\gamma^+}{c} - \eta \derivshort{\gamma^-}{c}
  = A  + \left( \dfrac{-\Psi(\theta_s)}{\gamma^+} -\eta c \right) \derivshort{\theta_s}{c}
    - \eta ( -\theta_s - c \derivshort{\theta_s}{c})$$
$$ \Rightarrow \derivshort{\Delta\gamma}{c}  = A + \eta \theta_s - \dfrac{\Psi(\theta_s)}{\gamma^+} \derivshort{\theta_s}{c}$$

Overall, the three remaining terms are positive, hence $\derivshort{\Delta\gamma}{c} > 0$.

\subparagraph*{\it Evolution of $S$}
Deriving equation \eqref{eq:definition:S}, we get: $\quad \partialdershort{S}{c} = \eta {\Qpyro}/({\Qpyro + \Qgas})$.
Following Assumption H\ref{assum:Qpyro_range} ($\Qpyro > -\Qgas$), we conclude that $\partialdershort{S}{c}$ has the same sign as $\Qpyro$.
In this proposition, we assume \mbox{$\Qpyro \in (-\Qgas, 0]$}, therefore we obtain $\partialdershort{S}{c} < 0$.

\subparagraph*{\it Evolution of $\xi$}
We now have determined the signs of each term appearing in the derivative of the \nomXi{} $\xi$ with respect to $c$.
Using the previously established relations, we can write:
$$\partialdershort{\xi}{c} = \partialdershort{\Delta\gamma}{c} - \partialdershort{S}{c} > 0$$
We conclude that $\xi$ is a monotonous function of $c$.
We have shown in the proof of Proposition \ref{prop:existenceSolution} that $\xi(\cmax) < 0$ and $\xi(0) > 0$,
i.e. that there exists at least one solution wave \modif{velocity}    
 $c$ such that the reduced system is solved.
The monotonicity of $\xi$ we just established is the additional property needed to prove that there is only one such solution.
Physical interpretations of the behaviour of $\xi$, $\Delta\gamma$ and $S$ are \modif{given} in \ref{part:physical_interpretation}.
\end{proof}

\begin{proposition}
\label{prop:uniquenessQp_positif}
 If $\Qpyro>0$, there exists a unique value of the wave \modif{velocity}    
 $c$ such that the problem stated in the reduced system 
 can be solved. This solution $c$ belongs to the interval $(\cmax, \cmin)$ with $\cmin$ such that $\theta_s(\cmin) = {\Qpyro}/({\Qgas+\Qpyro})$.
\end{proposition}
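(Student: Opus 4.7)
The plan is to reuse the framework developed for Proposition \ref{prop:uniquenessQp_negatif}, again studying the interface balance mismatch $\xi(c) = \Delta\gamma(c) - S(c)$, but to split the admissible interval $(\cmax, 0)$ into two sub-regions using the threshold $\cmin$ defined by $\theta_s(\cmin) = \Qpyro/(\Qpyro+\Qgas)$. The central algebraic step is to substitute the closed-form expression $\gamma(\theta_s^-) = -c\,\theta_s(c)$ obtained by integrating equation \eqref{eq:portrait:solid}, together with the definition \eqref{eq:definition:S} of $S$, into $\xi$, yielding
\begin{equation*}
\xi(c) \;=\; \gamma(\theta_s^+,c) \;+\; \eta\, c \left[\, \theta_s(c) - \dfrac{\Qpyro}{\Qpyro+\Qgas} \,\right].
\end{equation*}
This rewriting cleanly separates the gas-phase contribution $\gamma(\theta_s^+,c)>0$ from a term whose sign pivots exactly at $c=\cmin$.

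First I would rule out roots on $[\cmin, 0)$. Since $\theta_s$ is a strictly decreasing function of $c$ (Assumption H\ref{assum:interface} applied to the pyrolysis law), for any $c \in (\cmin, 0)$ we have $\theta_s(c) < \Qpyro/(\Qpyro+\Qgas)$, so the bracketed term is negative; multiplied by $\eta c < 0$ it contributes a strictly positive amount, while $\gamma(\theta_s^+,c)>0$ by Proposition \ref{prop:monotonicity}, so $\xi(c) > 0$. At $c=\cmin$ the bracket vanishes and $\xi(\cmin) = \gamma(\theta_s^+, \cmin) > 0$. Hence $\xi > 0$ on the whole of $[\cmin, 0)$ and no root lies there.

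Next I would establish existence and uniqueness on $(\cmax, \cmin)$. The value $\xi(\cmax) = \eta\,\cmax\,\Qgas/(\Qpyro+\Qgas) < 0$ has already been computed in the proof of Proposition \ref{prop:existenceSolution}, and we just showed $\xi(\cmin) > 0$, so combined with the $\mathcal{C}^\infty$ regularity of $\xi$ established there, the intermediate value theorem yields at least one root. For uniqueness, I would differentiate the rewritten $\xi$ and, borrowing the quantity $A$ and the calculation of $\derivshort{\Delta\gamma}{c}$ from the proof of Proposition \ref{prop:uniquenessQp_negatif}, obtain
\begin{equation*}
\derivshort{\xi}{c} \;=\; A \;+\; \eta \left[\, \theta_s(c) - \dfrac{\Qpyro}{\Qpyro+\Qgas} \,\right] \;-\; \dfrac{\Psi(\theta_s)}{\gamma(\theta_s^+,c)}\,\derivshort{\theta_s}{c}.
\end{equation*}
On $(\cmax, \cmin)$ we have $\theta_s(c) > \Qpyro/(\Qpyro+\Qgas)$, so the middle term is positive; $\Psi \geq 0$, $\gamma(\theta_s^+) > 0$ and $\derivshort{\theta_s}{c} < 0$ make the last term non-negative; and $A > 0$ strictly by Zeldovich's argument. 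Therefore $\derivshort{\xi}{c} > 0$ throughout this sub-interval, giving a unique root.

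The main obstacle compared with the negative-$\Qpyro$ case is that $\derivshort{S}{c} = \eta\,\Qpyro/(\Qpyro+\Qgas)$ is now positive and competes with $\derivshort{\Delta\gamma}{c}$, so the global monotonicity argument of Proposition \ref{prop:uniquenessQp_negatif} cannot be applied directly on the full range $(\cmax, 0)$. The key insight is to recognise that the combination $\eta\gamma^- + S$ factors through the threshold value $\Qpyro/(\Qpyro+\Qgas)$: this both identifies the critical velocity $\cmin$ cleanly and restores the one-sided sign of $\derivshort{\xi}{c}$ on $(\cmax, \cmin)$, while the absence of a root on $[\cmin, 0)$ is handled separately by a direct sign argument rather than by monotonicity.
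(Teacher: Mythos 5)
Your proposal is correct and follows essentially the same route as the paper's proof: the same rewriting $\xi(c)=\gamma(\theta_s^+,c)+\eta c\left[\theta_s(c)-\Qpyro/(\Qpyro+\Qgas)\right]$, the same boundary values $\xi(\cmax)<0$ and $\xi(\cmin)=\gamma(\theta_s^+,\cmin)>0$, and the same monotonicity $\derivshort{\xi}{c}=A+\eta(\theta_s-\thetamin)-\frac{\Psi(\theta_s)}{\gamma^+}\derivshort{\theta_s}{c}>0$ on $(\cmax,\cmin)$, with $A>0$ borrowed from the proof of Proposition \ref{prop:uniquenessQp_negatif}. The only cosmetic difference is that you exclude roots on $[\cmin,0)$ by direct sign inspection of the rewritten $\xi$, whereas the paper states the same fact physically (positivity of the gas-phase heat feedback $\gamma^+$ forces $T_s>\Tslim$, hence $c<\cmin$); the two arguments rest on the identical identity.
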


\begin{proof}
\label{proof:uniquenessQp_positif}
This result is obtained in a manner almost identical to the previous one. The difference lies in the behaviour of $S$.
With $\Qpyro>0$, we have $\partialdershort{S}{c} > 0$, as is $\partialdershort{\Delta\gamma}{c}$, therefore we cannot directly conclude on the sign of 
$\partialdershort{\xi}{c}$ for $c \in (\cmax,0)$.
To circumvent this difficulty, we will show that there exists a value $\cmin$ such that we always have $c < \cmin$,
which verifies $\xi(\cmin) > 0$, and such that $\xi$ is monotonous on the interval $(\cmax, \cmin)$.
Starting from the relations established in the proof of Proposition \ref{prop:uniquenessQp_negatif}, we can express $\derivshort{\xi}{c}$:

  $$ \derivshort{\xi}{c}  = \derivshort{\Delta\gamma}{c} - \derivshort{S}{c} = \underbrace{A - \dfrac{\Psi(\theta_s)}{\gamma^+} \derivshort{\theta_s}{c}}_{>0}
  + \eta \left( \theta_s - \dfrac{\Qpyro}{\Qgas	+ \Qpyro} \right)$$

Consequently, to ensure $\derivshort{\xi}{c}  > 0$, it is sufficient that the last term is positive:
$$\theta_s > \thetamin = \dfrac{\Qpyro}{\Qpyro+\Qgas} = \dfrac{\Qpyro}{c_p (T_f-T_0)}
\quad\Leftrightarrow\quad
T_s > \Tslim = 
T_0 + \dfrac{\Qpyro}{c_p}$$

Here we can give a physical interpretation of $\Tslim$. It is the temperature that would be achieved at the interface without any heat feedback
from the gas phase. Indeed if $\derivshort{T}{x}(0^+)=0$, we can integrate equation \eqref{eq:onde:solid} from $-\infty$ to $0$
and find $T_s = T_0 + {\Qpyro}/{c_s}$. Following Assumption \ref{assum:cpcs} ($c_p=c_s$), we recover our previous expression of $\Tslim$.

Now we need to show that all acceptable solutions have the property $T_s > \Tslim$. The monotonicity of the temperature in the gas phase, established in Proposition
\ref{prop:monotonicity}, associated with the condition $T_s < T_f$ shows that $\derivshort{T}{x} > 0$ in the gas phase.
This means that $\gamma$ is always positive in the gas phase: heat is always conducted from the gas phase into the solid phase.
As a consequence, $T_s > \Tslim$ is always satisfied in our problem.
That is also what we would expect from a physical point of view, as we know the gas phase will actually heat up the solid, not cool it down.
Moreover, using the constant combustion enthalpy property from Proposition \ref{prop:bilan:enthalpy} and the global energy balance from Proposition \ref{prop:bilan:T},
we find that $T_s>\Tslim$ is also the required condition to ensure $Y(0^+)<1$.

Overall, we are now assured that the surface temperature $\theta_s$ will always be higher than $\theta_{s,min}$.
Via the pyrolysis law \eqref{eq:modifiedPyrolysisLaw}, the minimum surface temperature $\theta_{s,min}$ corresponds to a regression \modif{velocity}    
 $\cmin<0$.
Therefore we conclude that $c$ will always be lower than to $\cmin$. We remind the reader that $c<0$, therefore $c<\cmin$ yields $|c|>|\cmin|$ (faster regression).

Let us now compute the value of $\xi$ for this value of $c$:
$$\xi(\cmin) = \gamma(\thetamin^+, \cmin) - \eta\gamma(\thetamin^-, \cmin) - \eta \cmin /(1+k) = \gamma(\thetamin^+, \cmin)$$
The strict monotonicity of $\theta$ implies that $\gamma$ is always positive. As a consequence, $\xi(\cmin) > 0$.
We have shown in the proof of Proposition \ref{prop:existenceSolution} that $\xi(\cmax) < 0$, therefore there exists a solution
wave \modif{velocity}    
 $c$ in the interval $(\cmax, \cmin)$, such that $\xi(c)=0$.

On this interval, we have established that $\derivshort{\xi}{c} >  0$, whence we conclude that the solution is unique within this interval.
Let us underline again that solutions outside of this interval are not physical and would lead to a violation of the monotonicity of the temperature profile.
Physical interpretations of the behaviour of $\xi$, $\Delta\gamma$ and $S$ are presented in \ref{part:physical_interpretation}.
\end{proof}

At this point, we have proved that there exists a unique solution to the reduced system,
therefore also for the dimensionless one and for the simplified travelling wave problem which were presented in \ref{part:derivation_modele_base}.
There exists only one steady travelling combustion wave solution for the burning of a homogeneous solid propellant with
simplified kinetics and a pyrolysis concentrated at the surface, with the surface temperature being linked to the mass flow rate by a pyrolysis law
such that the mass flow rate monotonously increases with the surface temperature.
The proof is valid for a very wide range of values for the heat of reaction of the pyrolysis process, and for any value of the gas phase activation energy.
\begin{remark}
\label{remark:difficulty_cpcs}
The Assumption H\ref{assum:cpcs} ($c_s=c_p$) made the proof of uniqueness much easier. If we had not used it, the \nomS{} $S$ would have a more complex variation
with respect to $c$ and no easy conclusion on uniqueness would be possible.
However the assumption can easily be relaxed in the numerical method presented further in this paper, as it only changes
the definition of $S$, $\Qpyro$ and $T_f$ (see Section \ref{subsubsection:parametric_study_cpcs}).
It is likely that the solution may remain unique on a certain range of values for the ratio $c_p/c_s$, and this was indeed
observed in our test case for a wide variety of values for this ratio.
\end{remark}

\begin{remark}
 Johnson and Nachbar \cite{1963JohnsonNachbarUnicity} proved the uniqueness of $c$ for any fixed value of $T_s$.
 This study case can also be treated with the approach we have presented, however a few adjustments are necessary, which is why we expose the main
 steps in appendix \ref{remark:casParticulierTsConstante}.
\end{remark}
\begin{remark} 
 The study presented in this paper also encompasses the laminar flame study by Zeldovich \cite{livreZeldovich}. In this case, $\theta_s = \theta_{ignition}$ is a constant,
 and $S=0$ as no chemical reactions takes place at the interface, therefore the existence and uniqueness is proved directly from Proposition \ref{prop:uniquenessQp_negatif}.
\end{remark}

\subsection{Heteroclinic orbit and critical points}
\label{subsection:point_crit}
The points $x=-\infty$ and $x=+\infty$ are critical points for the dimensionless system
, i.e. all derivatives are zero.
These points correspond to $(\theta=0, \gamma=0)$ and $(\theta=1, \gamma=0)$.
The dynamics of the dimensionless system in phase space is a heteroclinic orbit that joins these two points.
This orbit and the associated treatment of the critical points is very similar to the bistable planar waves studied in \cite{Volpert3}.
The first critical point $(0,0)$ is more difficult to analyse, as it is not a hyperbolic point, however we can easily integrate \eqref{eq:portrait:solid}
and find that the solution is $\gamma=-c\theta$.
The other critical point $(1,0)$, in the gas phase, is a hyperbolic point, therefore the solution curve (orbit) will depart from the associated stable manifold.
We can then determine the slope $\derivshort{\gamma}{\theta}(1)$ by means of a linearisation. We use the approximations 
$\gamma(\theta) = \alpha(\theta-1)$ and $\Psi(\theta) = \beta(\theta-1)$, with  $\alpha = \derivshort{\gamma}{\theta}(1)$ and $\beta = \derivshort{\Psi}{\theta}(1)$.
Following remark \ref{remark:psi}, we know that $\beta<0$. Injecting these linearised expressions into \eqref{eq:portrait:gas}, we get:
$\alpha^2 +  \alpha\tilde{\massflux} + \beta = 0$.
This second order equation has two real solutions of opposite signs.
As we require $\alpha = \derivshort{\gamma}{\theta}(1) < 0$ so that our solution remains in the half-plane $\gamma \geq 0$, we find:
\begin{equation}
\label{eq:slope_critic}
\alpha  = \frac{\tilde{\massflux}}{2} \left( 1 - \sqrt{ 1 - \dfrac{4 \beta}{\tilde{\massflux}^2} }   \right)
\end{equation}
The behaviour around the two critical points will be used in the numerical strategy based on a shooting method to integrate the dynamics of the orbit.

\subsection{Physical interpretation and discussion}
\label{part:physical_interpretation}
It will be easier to interpret the behaviour of our system by considering its variations with respect to the mass flow rate $\massflux$.
Let us remind the reader that the pyrolysis mass flow rate $\massflux=-\rho_s c$ is positive, whereas $c$ is negative.
Consequently and for any variable $q$:
$$\partialdershort{q}{c}<0 \Leftrightarrow \partialdershort{q}{\massflux}>0$$
In the present paper, we have introduced $\Delta\gamma$, which the dimensionless thermal power excess that is available to power the pyrolysis process, i.e. gas heat feedback minus the thermal power
used to heat up the solid, and $S$, the dimensionless thermal power that is required for the pyrolysis process to be sustained at the given value of $\massflux$.
We have shown that $\derivshort{\Delta\gamma}{\massflux} < 0$, therefore increasing $\massflux$ will decrease the thermal power available for the pyrolysis.
The sign of $\derivshort{S}{\massflux}$ indicates how the required thermal power evolves with the pyrolysis mass flow rate $\massflux$.

\paragraph*{case $\mathbf{\Qpyro<0}$}
In the case $\Qpyro<0$, the pyrolysis process is endothermic, i.e. it absorbs heat from the gas and solid phases. This can be the case if the sublimation of $\Gis$ into
$\Gig$ is very demanding in terms of energy, which corresponds to $L_v>Q_s$ in remark \ref{remark:Qpyro}.
We showed that in this case, $\derivshort{S}{\massflux}>0$.
If, for an arbitrarily chosen value of $\massflux$, we have $\Delta\gamma < S$, it means that the heat feedback from
the gas phase is too low compared to the heat that would be absorbed by the solid phase and the pyrolysis reaction in a stationary state.
The fact that $\derivshort{\Delta\gamma}{\massflux}<0$ and $\derivshort{S}{\massflux}>0$ shows that as we  lower the mass flow rate, the thermal power excess
transmitted by the gas phase to the interface increases whereas the thermal power needed for the pyrolysis decreases.
As we have seen that in the limit $\massflux \rightarrow 0$, $\Delta\gamma > S$, we know that we will find one value of $\massflux$ such that both powers cancel out.
Conversely if we start with $\massflux$ such that $\Delta\gamma > S$, we need to increase the value of $\massflux$. The limit case $\massflux = \massflux(T_s = T_f)$
yields $\Delta\gamma < S$, therefore we are also ensured that we will find one solution for $\massflux$.

\paragraph*{case $\mathbf{\Qpyro>0}$}
The same reasoning can be applied.
In this case the pyrolysis is exothermic, thus it also contributes to the heating of the solid phase.
We showed that $\derivshort{S}{\massflux}<0$, i.e. the thermal power required by the pyrolysis decreases as the mass flow rate increases, in the sense that it
is actually negative and increasing in magnitude. This is physically coherent with the fact that the pyrolysis is exothermic.
We have established that in the interval $(\cmax, \cmin)$, $\derivshort{\xi}{c}>0$, i.e. $\derivshort{\xi}{\massflux}<0$ for $\massflux \in (0, \massflux(T_f))$.
It shows that as we increase the mass flow rate $\massflux$, the thermal power excess transmitted by the gas phase to the interface
will decrease more rapidly than the thermal power needed for the pyrolysis.
Therefore, starting from a value of $\massflux$ such that the heat feedback is too strong ($\xi>0$), lowering $\massflux$ will only worsen the interface thermal balance.
We actually need to increase $\massflux$, up until the point where the thermal power $S$ required by the pyrolysis catches up with the thermal power excess $\Delta\gamma$.\\

\noindent The gradient jump $\left[ \derivshort{\theta}{x} \right]_{0^-}^{0^+}$ is the same as  $\left[ \gamma \right]_{\theta_s(c)^-}^{\theta_s(c)^+}$.
Using \eqref{eq:adim:BCs:bilanInterface}, we can  rewrite this as 
$\left[ \gamma \right]_{\theta_s(c)^-}^{\theta_s(c)^+} = S(c) + (\eta-1) \gamma\left( \theta_s(c)^- \right)$.
In the particular case where $\eta=1$, i.e. both phases have the same thermal conductivity, this reduces to $S(c)$, thus the gradient jump has the sign of $S$.
If we have $\eta \neq 1$, the sign of the gradient jump will depend on the gradient in the solid phase at the interface.
For example, if $\eta>1$, the temperature gradient jump at the interface will be positive only if $S(c) > (1-\eta)\gamma(\theta_s(c)^-)$.

This theoretical study brings in two aspects. First, it allows to describe a greater variety of physical scenarios, compared to the ones represented by the existing
analytical models.
Second, and this is the purpose of the next section, it allows for an efficient numerical resolution.

\section{Numerical method and \modif{verification} against a CFD code}
\label{section:numCFD}
\modif{We now explain how the previous analysis is used to construct a numerical shooting method to iteratively determine the solution, i.e. the correct wave velocity (eigenvalue)    
 and temperature profile (eigenfunction).
We also present a one-dimensional CFD code developed at ONERA for the study of solid propellant combustion. 
This CFD code can be adapted to use the same level of modelling as the shooting method.
Our first objective is therefore to compare the results of both methods to cross-verify these tools within the framework defined by our modelling assumptions
(Sections
\ref{subsubsection:refcase_unitLewis} and \ref{subsubsection:parametric_study_Ea}).
Our second objective is to show how the shooting method may be extended to relax some of these assumptions and what limitations may be encountered
(Sections \ref{subsubsection:parametric_study_cpcs} and \ref{subsubsection:parametric_study_Lewis}).
For this purpose, the CFD code will serve as a reference, as it allows for a straightforward relaxation of several assumptions.}

\subsection{Shooting method}
\subsubsection{Determination of the phase portrait for a given $c$}
For a given value of $c$, we can integrate the dimensionless equations \eqref{eq:portrait:solid} and \eqref{eq:portrait:gas} from the reduced system,
which are first-order ODEs for the variable $\gamma$ as a function of $\theta$.
In the solid phase, the integration is analytical, as we directly obtain $\gamma(\theta) = -c\theta$. This gives us the value of $\gamma$
for $\theta \in [0, \theta_s(c)]$.
In the gas phase, equation \eqref{eq:portrait:gas} can be written as: $\derivshort{\gamma}{\theta} = -\eta c - (\Psi/\gamma)$.
We need to integrate this equation from $\theta=1$ to $\theta=\theta_s$. 
As explained in subsection \ref{subsection:point_crit}, the starting point $(\theta=1, \gamma=0)$ is a critical point for our system, therefore starting a numerical
integration from this point is impossible. To overcome this problem, we simply use the linearised solution slope $\alpha$ given in \eqref{eq:slope_critic},
and start the integration from $( 1-\Delta\theta, -\alpha \Delta\theta)$, avoiding the critical point. We typically use $\Delta\theta = 10^{-6}$.
To maximise the accuracy, the integration of the gas phase equation \eqref{eq:portrait:gas} is performed using the Radau5 algorithm \cite{hairer_radau5},
featuring an adaptive step size, with very tight tolerances ($\approx 10^{-14}$).
Once the profile of $\gamma$ is computed, we can go back to the spatial representation by using the definition $\gamma = \derivshort{\theta}{x}$ to compute
$x(\theta) = \int_{\theta_s}^{\theta} \frac{z}{\gamma(z)} dz$. This formula also ensures that $x(\theta_s) = 0$.

\subsubsection{Determination of $c$ through a dichotomy process}
Based on our analysis of $\xi$, we know that $\xi(0)>0$ and $\xi(\cmax)<0$. In the case $\Qpyro<0$,
we start a dichotomy from the two initial points $0$ and $\cmax$, the latter being computed beforehand from the global energy balance and the pyrolysis law.
If $\Qpyro>0$, we replace the starting value $0$ with $\cmin$.
In both cases, $\xi$ is monotonous between the two initial points and undergoes a change of sign, therefore convergence of the dichotomy process is ensured.
For each new guess of $c$, we integrate the reduced system as explained previously,
and obtain the value of $\Delta\gamma(c)$. We compare it to the value of $S(c)$ to compute $\xi(c)$. Based on the sign of $\xi(c)$, we can shrink the interval
where $\xi$ changes sign, until the change in $c$ between each iteration becomes small enough.

We could also perform a constrained optimisation on the variable $c$, minimising the objective function
$f(c) = [ S(c) - \Delta\gamma(c)]^2$, with the constraint $c \in [\cmax, c_{min}]$.
Practically, the optimisation method is quicker to find the approximate solution, but fails at determining $c$ as precisely as the dichotomy process,
even when using tight tolerances. However the dichotomy requires many iterations, therefore the more advanced Brent root-finding method \cite{Brent1973} was used.
In our test cases, the solution was usually found within 10 iterations.

\begin{remark}
 This \modif{semi-analytical} method is bound to be more accurate than the analytical models discussed in the introduction, as these models basically use the same assumptions, but also
 assume that the activation energy of the gas phase reaction is either very high or zero. Our method does not need this information and will better reproduce
 the gas flow. This comes at the cost of having to iterate on the value of $c$, each time integrating numerically the reduced system.
 However, this cost will be rather low, as each iteration only requires the integration of the simple ODE \eqref{eq:portrait:gas}.
 This method is consequently especially useful to perform extensive parametric studies.
\end{remark}

\subsubsection{Error of the method}
The numerical shooting method contains 3 sources of error:
\begin{packed_list}
 \item error in the estimation of $\derivshort{\gamma}{\theta}(1)$, used to avoid the critical point in the gas phase;
 \item error in the numerical integration of the gas phase temperature profile;
 \item convergence precision achieved by the shooting method on the value of $c$.
\end{packed_list}
Let us address the different items in this list.
First, a simple parametric study on the value of $\Delta\theta$ has shown that $\derivshort{\gamma}{\theta}$ is a constant
in the neighbourhood of the critical point. Different values of $\Delta\theta$ have
been tested and the converged regression \modif{velocities}    
are exactly identical for all $\Delta\theta$ lower than $10^{-3}$. Consequently the linearisation around the critical point
is a reasonable approach and the error it produces is zero up to machine precision.
The numerical integration of the gas phase with the Radau5 algorithm with very tight tolerances is also close to machine-precision,
as the algorithm is of order 5 and the step size is dynamically adapted to maximise accuracy.
A convergence study has been performed by varying the integration tolerance from $10^{-3}$ to $10^{-15}$, each time determining the solution $c$
via the dichotomy process (Brent method).
It has been found that the solution wave \modif{velocity}    
 obtained for a tolerance of $10^{-14}$ is converged with a relative error of $10^{-14}$.
Finally, the dichotomy process usually is able to converge the solution $c$ with a relative error of the order of $10^{-15}$.
Overall the only practical limitation to the precision of this numerical shooting method is the machine accuracy chosen for implementation.
All our numerical computations have been performed with double precision.

\subsection{Reference CFD code}
We wish to compare our semi-analytical model with a proven CFD code in a less restrictive framework. The aim is to verify the shooting method results and validate our
assumptions.
The CFD tool developed at ONERA is a Fortran90 code based on a finite-volume approach for the one-dimensional problem, inspired from  \cite{Smooke_Gio}. The model has also been adapted for the \modif{combustion of aluminium particles} \cite{dmitri_modele1D}. The molecular diffusion fluxes are approximated using a second-order central difference
scheme. The convective fluxes are approximated \modif{either} by a first-order upwind scheme, a second-order hybrid scheme weighted by the local Péclet number, \modif{or a second-order centred scheme}.
The equations are written in their steady form in the travelling combustion wave reference frame.
\modif{These equations are discretised in space and, together with the boundary conditions,} they represent a system of coupled non-linear equations.
A modified Newton method \modif{with damping} is used to determine the solution, as described in \cite{Smooke_Gio}. The Jacobian matrix is computed numerically \modif{by finite differences}.
The convergence strongly depends on the initial state. If convergence is poor, temporal evolution terms can be added
to the equations to approach the steady-state solution through a number of transient iterations.
This code also contains an automatic grid-refinement algorithm that ensures the mesh is fine enough
in the regions where the gradient or the curvature of the solution variables are high.
The refinement is performed after each successful convergence to a steady solution, until all refinement criteria are satisfied.
The code can handle detailed chemistry by accessing reaction and thermodynamic data through an interface with CHEMKIN-II \cite{ref_CHEMKIN2}.
Detailed molecular transport with binary species diffusion is also possible with the use of the EGlib library \cite{EGlib}.
However for the comparison with the numerical shooting method, these additional capabilities are not used.
This CFD code yields solutions which are subject to different sources of errors: the quality of the discretisation (grid refinement),
the tolerance for the Newton method, and the fluxes approximations. All CFD results presented further on were computed with automatic mesh refinement criteria
such that any additional refinement does not change the solution. The tolerance on the \modif{norm of the Newton step is $10^{-8}$,
and it was verified that lowering this tolerance did not change the results.}

\subsection{Numerical verification and parametric studies}

\subsubsection{Reference case with unitary Lewis} 
\label{subsubsection:refcase_unitLewis}

\modif{The reference case we will use throughout the rest of the article is}
the combustion of a one-dimensional equivalent of the AP-HTPB-Al propellant. The values for the different
properties are adapted from \cite{lengelle}.
The reaction rate is $\tauxreaction{} = A [\Gi] T \exp(-T_a/T)$ with $[\Gi]=\rho Y / \molarmass$ the molar concentration of the species $\Gi$.
The activation energy for the gas-phase reaction is $E_a = 58.7~ \textnormal{kJ.mol}^{-1}$, which corresponds
to an activation temperature $T_a = E_a/R = 7216$ K.
The specific heats are $c_s = c_p = 1.2 \times 10^3 \textnormal{J.kg}^{-1}\textnormal{K}^{-1}$.

The pressure is set to 5 MPa. The heat of reactions are $\Qpyro = 1.8 \times 10^5 \textnormal{ J.kg}^{-1}$, and $\Qgas = 3.9 \times 10^6 \textnormal{ J.kg}^{-1}$. 
For the CFD code, the diffusion coefficient $D_g$ for both species is taken as a linear function of $T$, such that the Lewis number
$\mathrm{Le} = (\lambda_g / (\rho (T) c_p))/ D_g(T) $ is 1 across the gas phase.
Figure \ref{fig:comparaison_Dmitry_50bars} shows a graphical comparison of the dimensionless temperature $\theta$ and mass fraction $Y$ profiles.
The agreement is very good, and has been verified for several other values of the pressure $P$ (e.g. 0.5 MPa). The relative error between the regression speed obtained from the semi-analytical tool and the one obtained with the CFD code with mesh adaptation is around $10^{-7}$. This allows us to conclude on the verification of our numerical strategy and model implementation.

\begin{figure}
\centering
\begin{minipage}{.47\textwidth}
  \centering
    \includegraphics[width=1.1\textwidth]{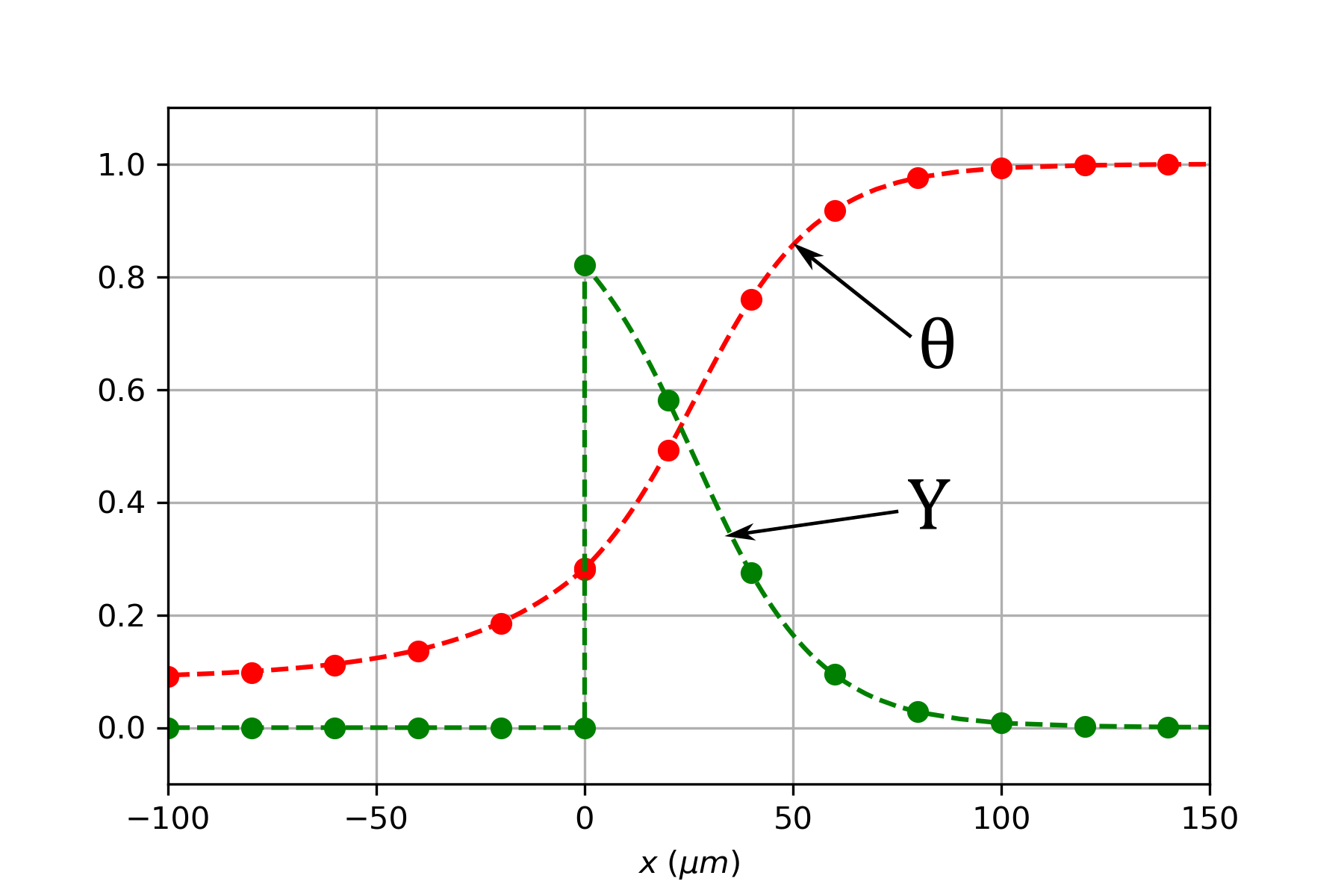}
    \captionof{figure}{Plot of $\theta$ and $Y$ of the semi-analytical solution (dots)
     with the numerical simulation (dashed lines)}
    \label{fig:comparaison_Dmitry_50bars}
\end{minipage}%
\hfill
\begin{minipage}{.47\textwidth}
	\centering
	  \includegraphics[width=1.1\linewidth, trim={0cm 0cm 0 0cm},clip]{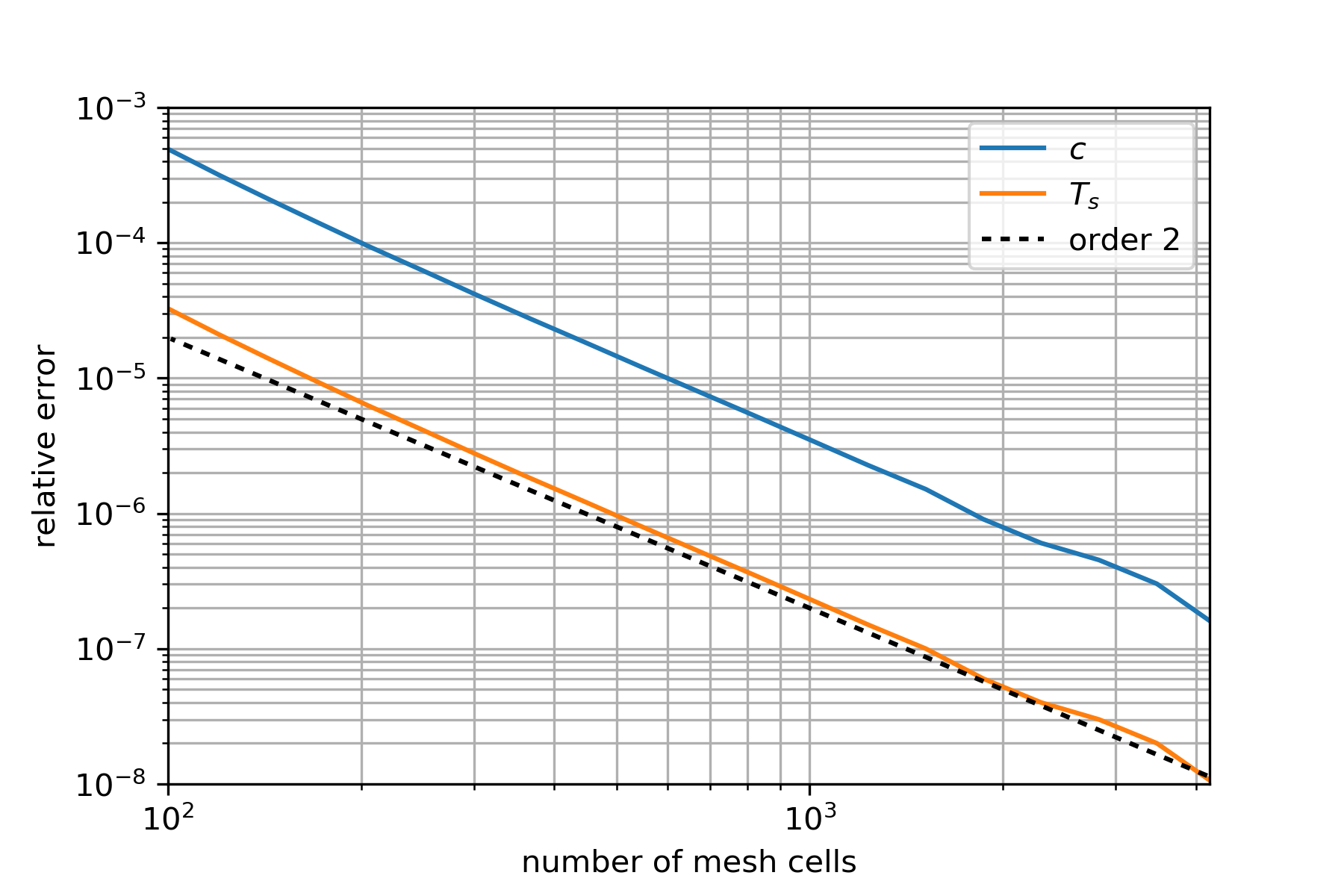}
	  \captionof{figure}{Convergence of the CFD solution towards the semi-analytical solution with an adapted mesh}
	  \label{fig:convergence_CFD_vers_semi}
\end{minipage}
\end{figure}

\subsubsection{Spatial convergence of the CFD solution}
\label{subsubsection:CFD_convergence}
\modif{It is interesting to study the convergence in space of the steady-state CFD solution.
We perform multiple simulations on increasingly refined grids. The meshes are generated as follows: knowing the temperature profile from the semi-analytical solution and starting from an initial grid point at $x=0$ (interface), the other grid points are placed such that the difference in interpolated temperature between two successive grid points is below a certain threshold. By varying this threshold (from 0.05K to 50K), grids with varying level of refinement are obtained, whose point distribution is relatively well adapted to the problem. The finite volume mesh is then generated by taking these grid points as the positions of the cell faces.
	In this reference case, the thermal layer in the solid phase and the flame in the gas phase both have a thickness  close to $10^{-4}$m. The generated meshes are extended by adding cells with gradually increasing sizes so that the abscissa of the outer cells are ten times greater than this thickness in order to minimise the influence of the Neumann boundary conditions. It has been verified that extending the mesh further does not improve the relative error.}
	
	\modif{We show in Figure \ref{fig:convergence_CFD_vers_semi} the convergence of the CFD result towards the semi-analytical solution for the reference case.
 We see that second-order accuracy is reached, and that the relative errors reach $10^{-8}$ on $T_s$ ($10^{-7}$ on $c$ and similar results are obtained on temperature profiles) at around 4000 adapted mesh cells. For a given level of relative error, it was determined that a uniform mesh would require approximately ten times more points when using the smallest cell size of the corresponding adapted mesh. This shows that the CFD code is definitely more computationally intensive, and requires an adapted mesh to produce accurate results. Achieving a relative error lower than $10^{-8}$ on $T_s$ is difficult as this level of error is very close to the tolerance on the Newton step, i.e. the relative precision of the CFD solution obtained by the Newton solver.}

\modif{Overall, the error is sufficiently small so that we can consider that the CFD solution is converged in terms of spatial mesh and Newton iterations. The automatic mesh refinement available in the CFD tool yields similar level of errors, therefore it will be used for the rest of the comparison.
The resolution of the travelling wave problem is coherent between the two tools thus bringing out a useful cross-verification of both approaches.}
  
\subsubsection{Parametric study with variable gas phase activation energy}
\label{subsubsection:parametric_study_Ea}

We know that for the simplified chemical mechanism used, the activation energy $E_a$ of the gas phase reaction will be of paramount importance.
Indeed, if $E_a$ is low, the reaction will be very fast at lower temperatures in a narrow zone just above the surface,
which will lead to a strong heat feedback and a high regression rate.
On the opposite, if it is very large and every other parameter is unchanged, the reaction will be slower and more spread out spatially,
thus diminishing the heat feedback from the gas phase onto the solid propellant, resulting in a slower regression rate.

To highlight the effect of $E_a$, we compute with both methods the temperature profile for three values of $T_a= E_a/R$ (activation temperature),
representative of low, mid and high activation energies. The pressure remains at 5 MPa. The Lewis number is 1 for both methods. All the other parameters are not modified,
therefore neither the regression \modif{velocity},    
the surface temperature, nor the heat feedback from the flame will be the same for all three cases.

Figure \ref{fig:sweep_Ea:Tprofil} shows the spatial temperature profiles.
We see that as $T_a$ decreases, the profile becomes sharper and the flame gets closer to the surface of the propellant.
Figure \ref{fig:sweep_Ea:portrait} shows the phase portraits of these three simulations. The ordinate $\derivshort{T}{x}$ is scaled for each simulation separately,
so that the maximum is 1, otherwise the high values of $\derivshort{T}{x}$ encountered in the case $T_a=0$ would make it difficult to compare the curves.
As $T_a$ increases, the abscissa $T_s$, i.e. the propellant surface temperature, increases and so does the height of the gradient jump between the two phases.
As the activation energy is lowered, the flame becomes thinner and the heat feedback on the solid grows due to the stronger temperature gradient near the surface.
The higher surface temperature results in a greater regression rate through the pyrolysis law \eqref{eq:modifiedPyrolysisLaw}, which in turn increases the thermal
effect of the pyrolysis, i.e. the size of the gradient jump at the surface.

The fact that the gas phase portrait for $T_a=0$ is a straight line can be surprising. This is actually related to the Arrhenius law used. As stated before,
the reaction rate is of the form $\tauxreaction{} \propto [\Gi] T \exp(-T_a/T)$. Using the constant enthalpy from Proposition \ref{prop:bilan:enthalpy},
the ideal gas law and expressing the concentration $[\Gi]$ as $\rho Y/\molarmass$ in equation \eqref{eq:onde:energy},
one may easily verify that a linear function of the form $\gamma = \alpha(1-\theta)$ is a solution.


\begin{figure}
\centering
\begin{minipage}{.47\textwidth}
  \centering
  \includegraphics[width=1.1\linewidth, trim={0cm 0 0 0},clip]{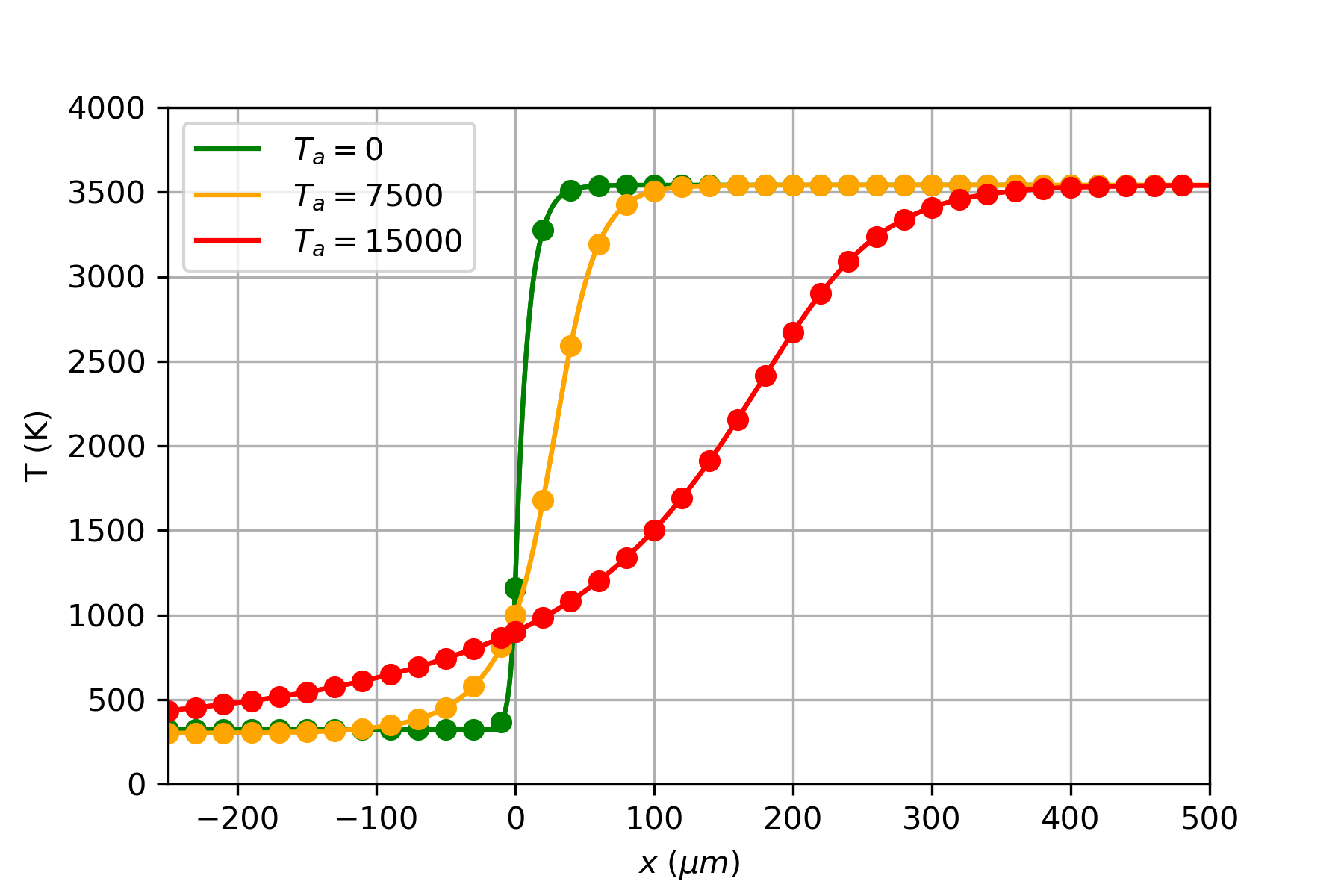}
  \captionof{figure}{Temperature profiles, CFD results (full lines) compared to semi-analytical results (circles) }
  \label{fig:sweep_Ea:Tprofil}
\end{minipage}%
\hfill
\begin{minipage}{.47\textwidth}
  \centering
  \includegraphics[width=1.1\linewidth, trim={0cm 0cm 0 0cm},clip]{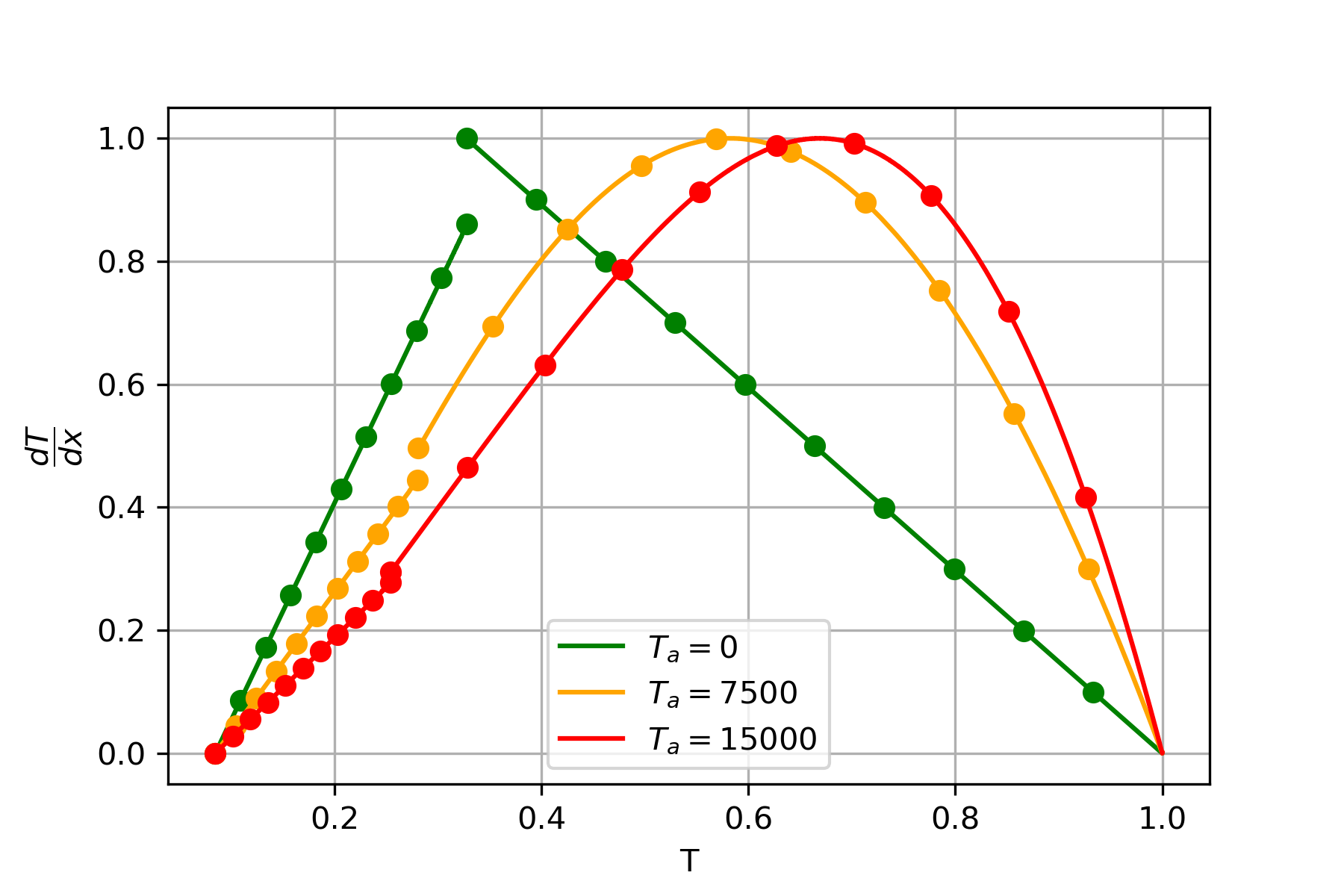}
  \captionof{figure}{Normalised phase portraits, CFD results (full lines) compared to semi-analytical results (circles) }
  \label{fig:sweep_Ea:portrait}
\end{minipage}
\end{figure}

A more thorough parametric study has been performed to obtain Figure \ref{fig:sweep_Ea:regression}. The agreement of both methods for the prediction of the
regression speed
 is very good across the whole range of activation temperatures, \modif{with a relative error of approximately $10^{-7}$ on $c$.}
An important remark is that the CFD solution often fails \modif{to converge when the initial mesh is not suited, and when the initial solution is not sufficiently good}.
For example, the case $T_a=0$ involves very strong temperature gradients, which required adding many more mesh points close the surface for the initial solution.
On the opposite, the case $T_a = 15000$ K gives a very smooth and slowly evolving temperature profile, but this translates to a very spread out flame, requiring
additional mesh points far from the surface so that the combustion process is fully represented.
Rather than remedying these problems manually \modif{by using a single extended mesh and performing transient iterations to facilitate convergence, we use the semi-analytical method to generate the initial solution, and define an initial mesh as explained in Section \ref{subsubsection:CFD_convergence} with a sufficient extension so that the gas phase reaction is completed within the computational domain. With this approach, the CFD code converges very quickly and can further refine the mesh if needed.
This highlights some of the main advantages of the semi-analytical method, which are that the solution always converges, and that no manual mesh adjustments are needed.}

Figure \ref{fig:sweep_Ea:regression} also shows the results of the analytical models WSB and DBW. Their pre-exponential factors $A_p$ and $A$ were adjusted so that 
both models predict the same regression \modif{velocity}    
 at $T_a=7216$K as the CFD model. The WSB model assumes $T_a=0$, therefore the regression \modif{velocity}    
 does not vary with $T_a$.
We see that the tendencies are reasonable, even if not in perfect  agreement  (we use a log scale), between the semi-analytical model and the DBW model for
high activation energies. However the DBW model, which assumes high gas phase activation energy, falls apart when $T_a$ is decreased.
Overall, the semi-analytical model is a more generic model that produces quantitatively good results, without any assumption on $T_a$.

\begin{figure}
\centering
\begin{minipage}{.47\textwidth}
  \centering
  \includegraphics[width=1.1\linewidth, trim={0cm 0 0 0},clip]{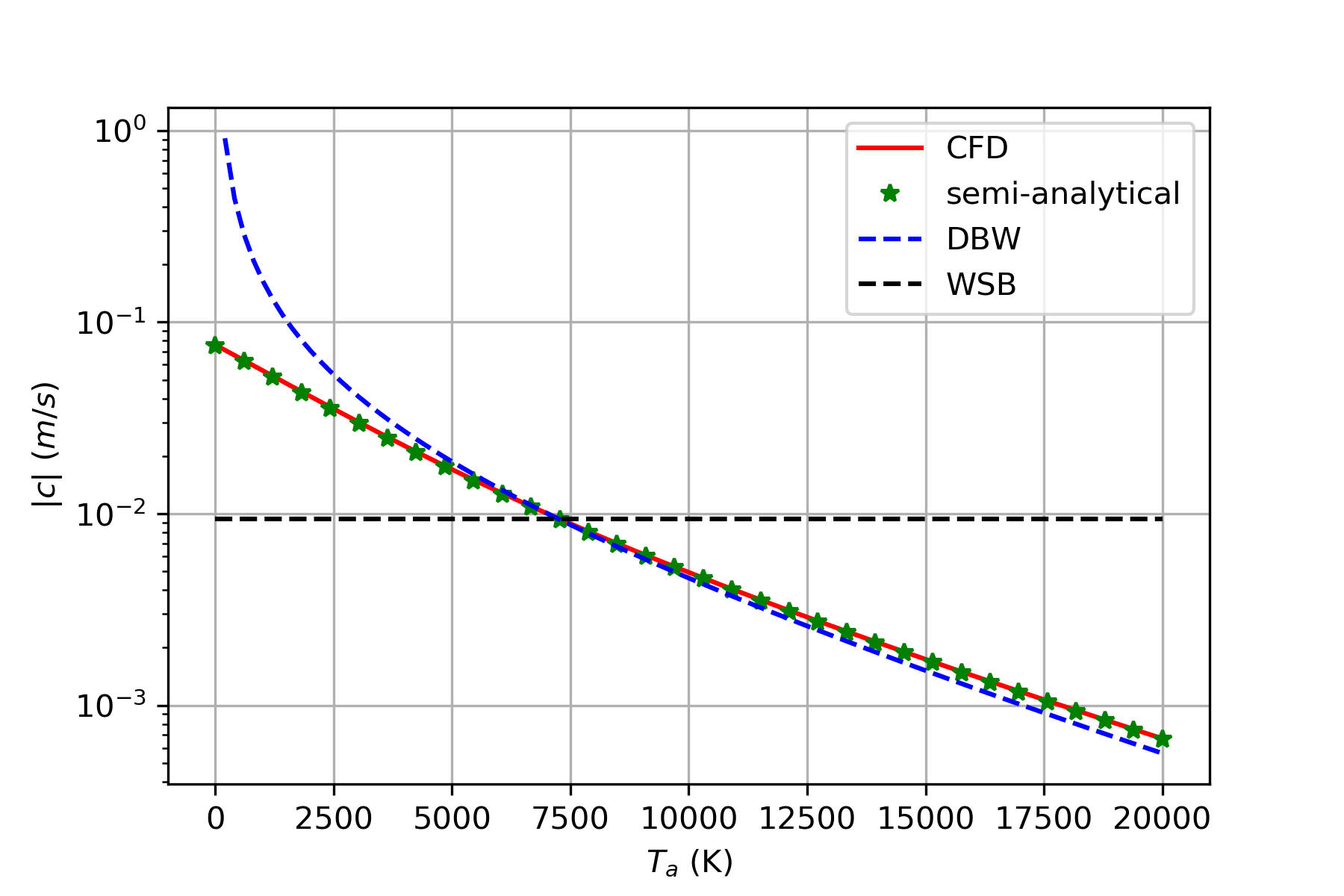}
  \captionof{figure}{Regression speed as a function of activation temperature, CFD results compared to semi-analytical and analytical results}
  \label{fig:sweep_Ea:regression}
\end{minipage}
\hfill
\begin{minipage}{.47\textwidth}
  \centering
  \includegraphics[width=1.11\linewidth, trim={0cm 0 0 0.95cm},clip]{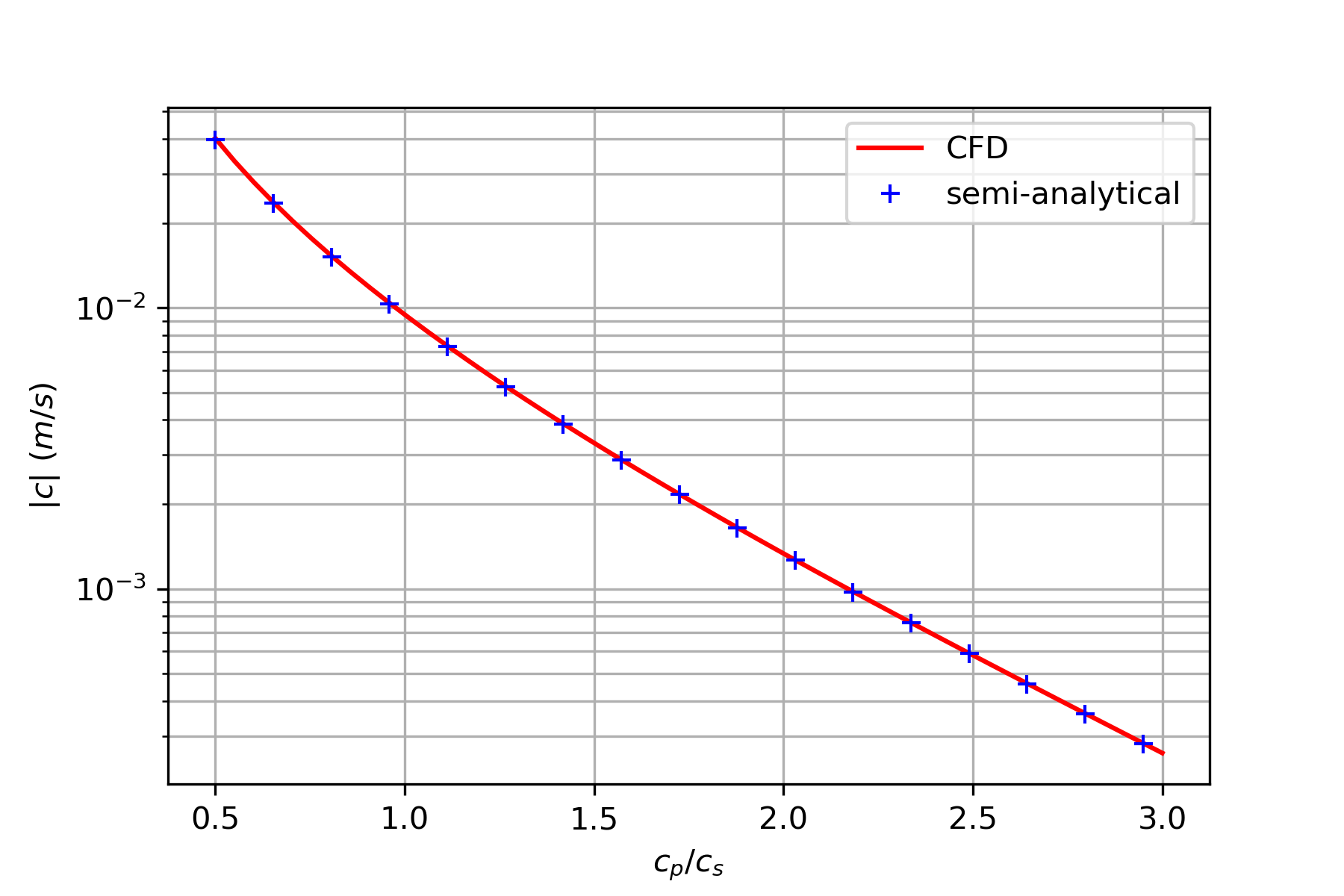}
 \captionof{figure}{Evolution of the regression speed with $c_p/c_s$ obtained with the semi-analytical and CFD methods}
  \label{fig:sweep_cpcs:c_vs_cpcs}
\end{minipage}%
\end{figure}

\subsubsection{Parametric study on the ratio $c_p/c_s$}
\label{subsubsection:parametric_study_cpcs}

We now wish to extend the numerical method beyond its theoretical ground, by relaxing  Assumption H\ref{assum:cpcs}.
Simulations are performed with the CFD code and our semi-analytical tool, by varying the gas specific heat $c_p$ at constant
$c_s=1.2 \times 10^3 \textnormal{J.kg}^{-1} \textnormal{K}^{-1}$.
The species diffusion coefficient $D_g$ is taken as a linear function of $T$, such that the Lewis number remains equal to unity. Therefore it varies with $c_p$.
In our semi-analytical tool, we account for $c_p \neq c_s$ by using the equation \eqref{eq:definition:Scpcs} instead of \eqref{eq:definition:S} for $S$, 
and $T_f$ and $\Qpyro$ are computed as in remarks \ref{remark:energyBalanceCsCp} and \ref{assum:cpcs}.
The ODE \eqref{eq:portrait:gas} changes slightly as the ratio $c_s/c_p$ appears as an additional factor for the term in $\eta c$, \modif{which also affects the slope \eqref{eq:slope_critic} of the solution near the critical point.}
The results are shown in Figure \ref{fig:sweep_cpcs:c_vs_cpcs} for a wide range of ratios $c_p/c_s$ ($0.5$ to $3$), \modif{which encompasses all physically relevant solid propellant configurations.}
We see that the CFD code and the semi-analytical model are again in very close agreement.
\modif{The relative error between both tools is around $10^ {-8}$ on the surface temperature ($10^{-7}$ on $c$).}

\modif{As we do not have a theoretical proof of existence and uniqueness when $c_p \neq c_s$, we have performed a more extensive study to observe the behaviour of $\xi$ when we vary the ratio $c_p/c_s$, even if it is outside of the physically relevant interval.
The curve of $\xi(c)$ is plotted for various ratios $c_p/c_s$ in Figure \ref{fig:various_cpcs}. Each curve is normalized by $\xi(0)$, the limit of $\xi$ when $c$ tends to 0. We see that $\xi$ remains monotonous and only has one zero-crossing.
Figure \ref{fig:cpcs_limites} shows how the solution wave speed $c_{sol}$ is located between the bounds $\cmax$ and $\cmin$ as the ratio $c_p/c_s$ changes. We observe that the solution remains within these bounds, and tends to $\cmax$ for high values of the ratio $c_p/c_s$. When this ratio is low, both the solution and $\cmin$ tend to $\cmax$.
Overall, this numerical investigation shows that the semi-analytical model can still be reliable beyond the simplified level of modelling adopted for the theoretical analysis.
}
       \begin{figure}[hpbt!]
        \begin{subfigure}[t]{0.5\textwidth}
	        \centering
	        \includegraphics[width=\textwidth]{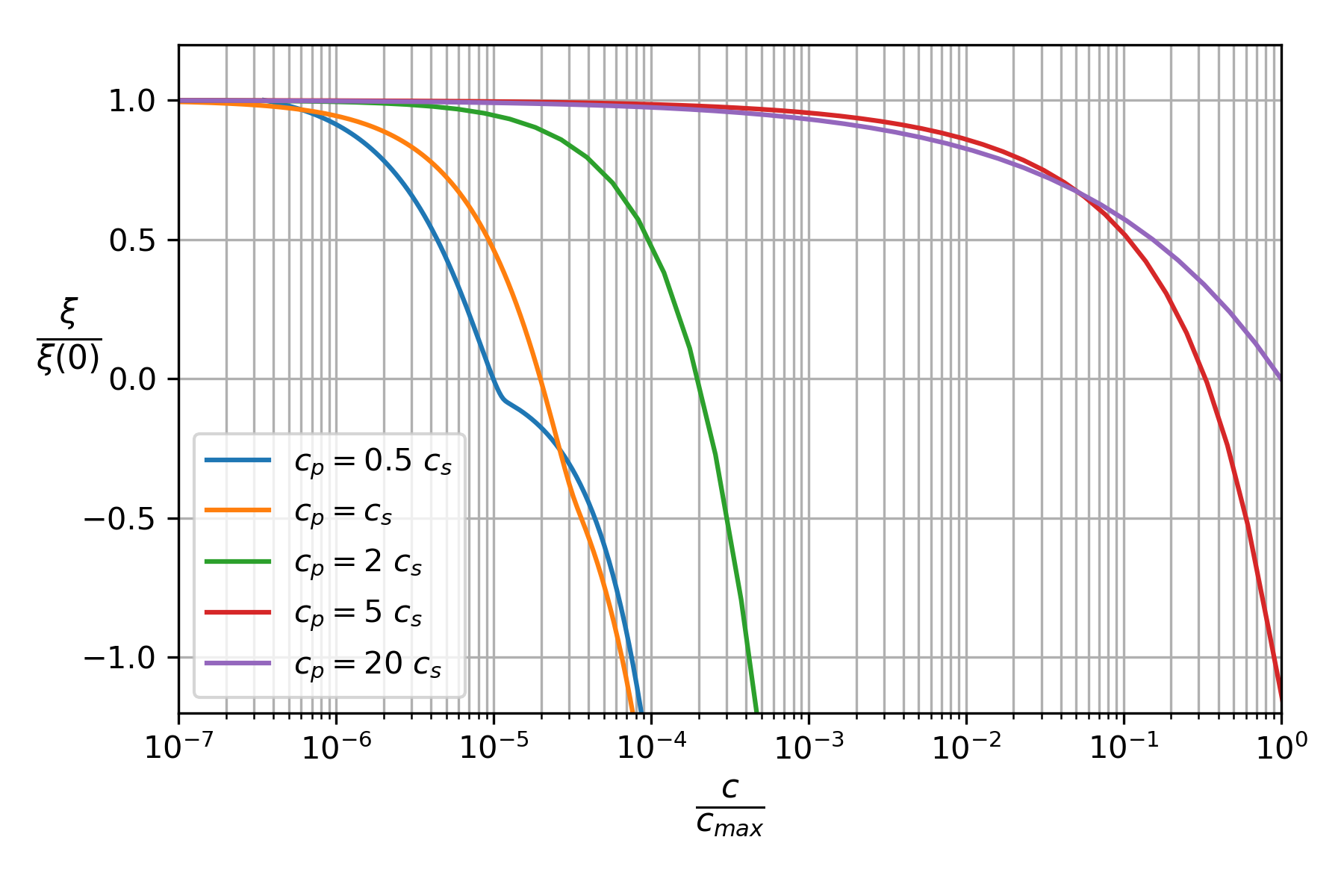}
	        \caption{\centering
	        Evolution of $\xi$ with $c$ for different ratios of $c_p/c_s$}
	        \label{fig:various_cpcs}
        \end{subfigure}
        ~
        \begin{subfigure}[t]{0.5\textwidth}
	        \centering
        	\includegraphics[width=\textwidth]{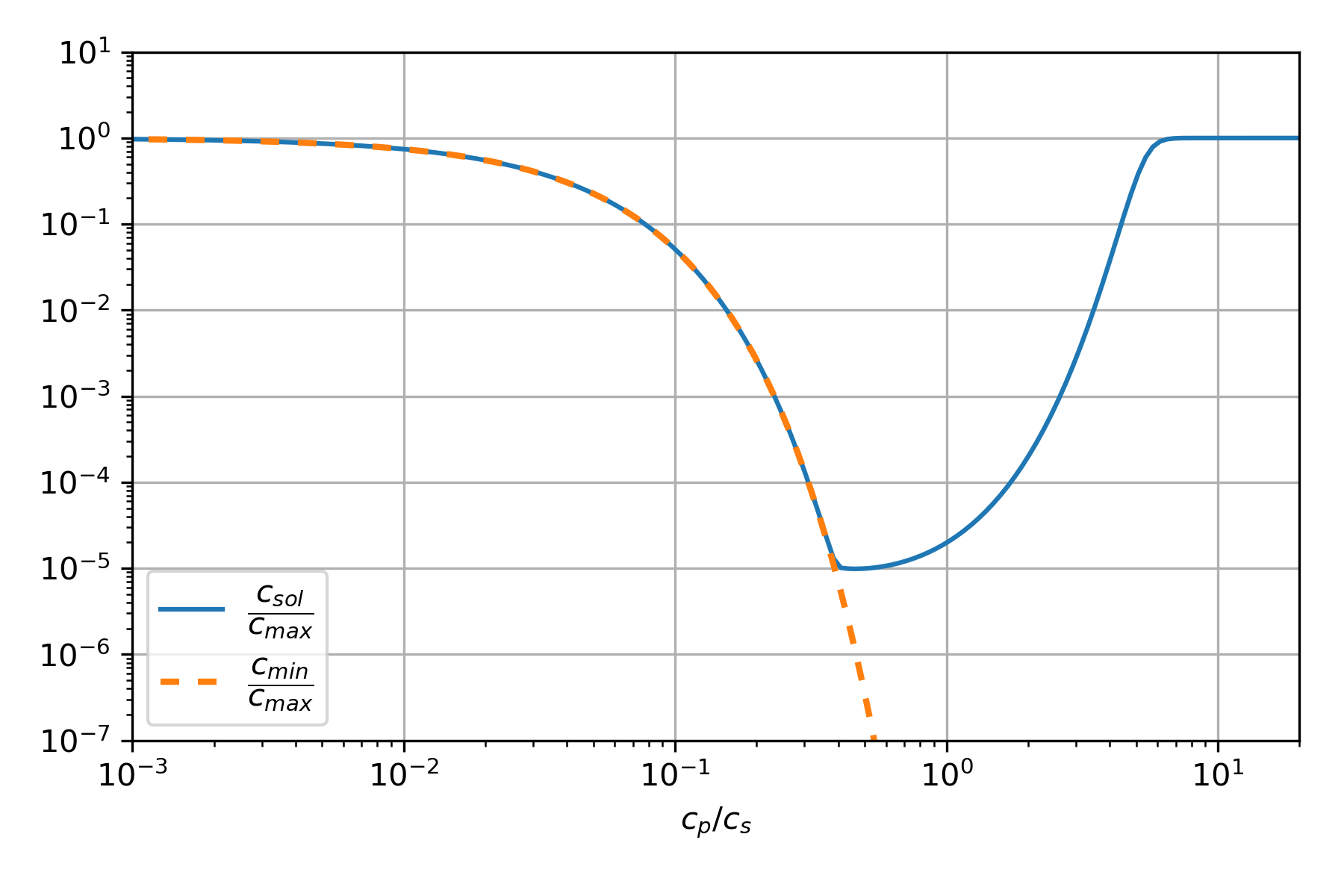}
	        \caption{
	        \centering
	       Evolution of the solution regression speed $c_{sol}$ (blue dashes) and lower bound $\cmin$ (orange line) compared to $\cmax$}
	        \label{fig:cpcs_limites}
        \end{subfigure}
        \caption{Effects of the ratio $c_p/c_s$ on $\xi$ and on the solution regression speed}
      \end{figure}

\subsubsection{Parametric study on the Lewis number}
\label{subsubsection:parametric_study_Lewis}
\modif{The unitary Lewis number assumption H\ref{assum:lewis} allows to simplify the system by only having to consider the temperature and its gradient as variables. This can be relaxed numerically, however it would make the shooting method more complex, requiring the addition of the mass fraction $Y$ and its gradient as variables in the dynamical system equations. It may also affect the existence and uniqueness of the solution. Zeldovich has reported that uniqueness can indeed be lost for laminar flames \cite{livreZeldovich} when $\mathrm{Le}>1$.}
To show the limits of the semi-analytical model, we conduct a study on the effect of a constant Lewis number, but with a value different than 1.
To do so, just as before, the CFD code uses species diffusion coefficients that are linear with $T$, such that $\mathrm{Le}$ is constant in the gas phase.

If $\mathrm{Le}$ is high, heat diffuses faster than species, therefore we expect a stronger thermal feedback from the gas phase, 
resulting in a faster regression speed.
When the Lewis number is decreased below 1, we expect the opposite effect. 
This is confirmed in Figure \ref{fig:sweep_Lewis:Tprofil} which shows the temperature profiles for three different values of $\mathrm{Le}$.
Figure \ref{fig:sweep_Lewis:error} shows the relative error of the semi-analytical model for the estimation of $c$,
compared to the CFD result, for Lewis numbers within the realistic range from $0.5$ to $3$.
As expected, the minimum error is reached \modif{at} $\mathrm{Le}=1$. 
For $\mathrm{Le}>1$, the semi-analytical model underestimates $c$ as it underestimates the temperature gradients near the surface. For $\mathrm{Le}<1$, $c$ is overestimated.
Still, the relative error on $c$ lies within $20\%$. The relative error on $T_s$ is much smaller ($\approx 1\%$).
The exponential term in the pyrolysis law with the high pyrolysis activation temperature
$T_{ap}$ is the root of this difference, as a small relative error in the evaluation of $T_s$ translates into a greater one for $c$.
Overall, this parametric study shows that the unitary Lewis number assumption still allows for a quantitatively reasonable solution.

\begin{figure}
\centering
\begin{minipage}{.47\textwidth}
  \centering
  \includegraphics[width=1.1\linewidth, trim={0cm 0 0 0},clip]{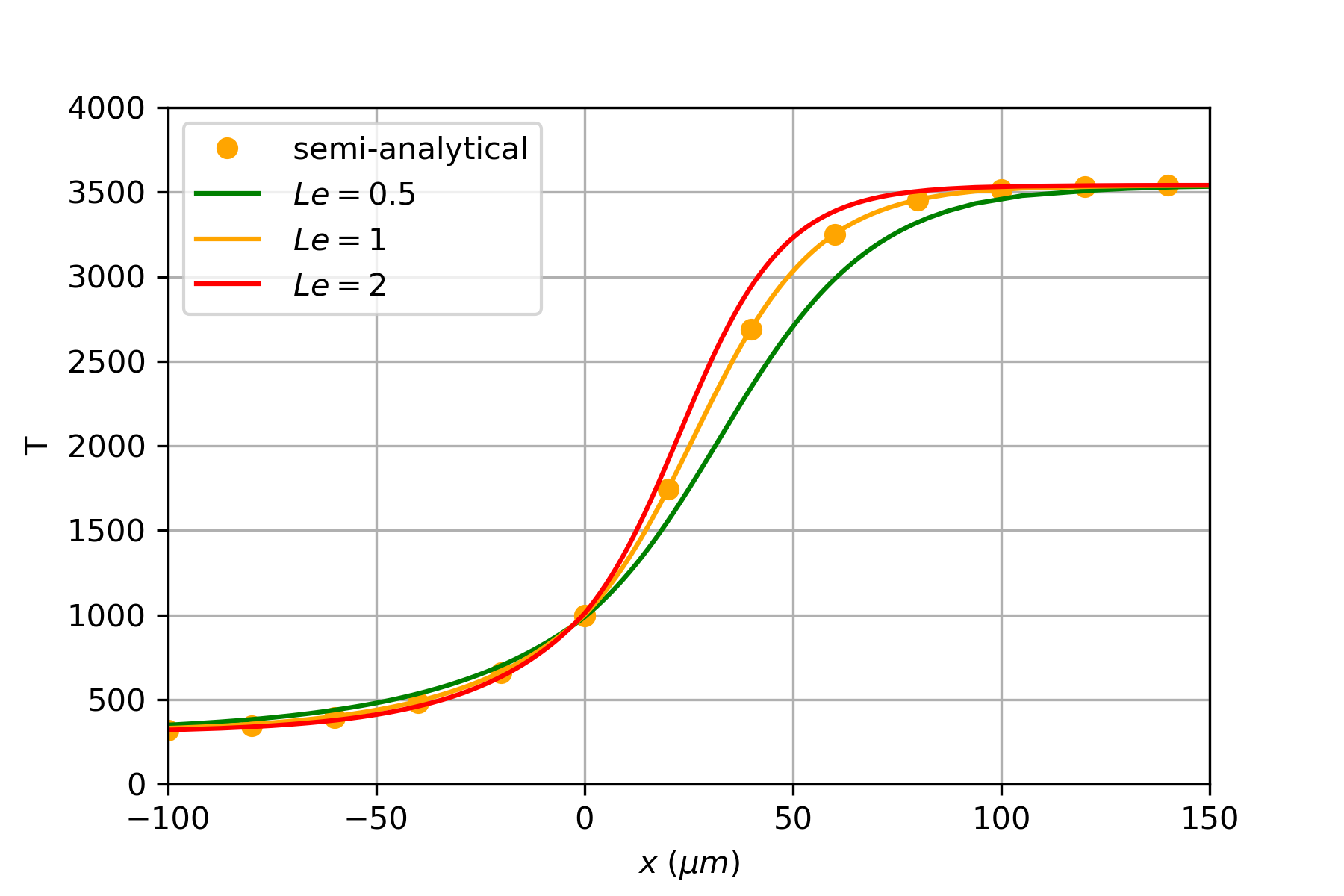}
  \captionof{figure}{Temperature profiles, CFD results (full lines) compared to semi-analytical results (dots) for different values of the Lewis number}
  \label{fig:sweep_Lewis:Tprofil}
\end{minipage}%
\hfill
\begin{minipage}{.47\textwidth}
  \centering
  \includegraphics[width=1.1\linewidth, trim={0cm 0cm 0 0cm},clip]{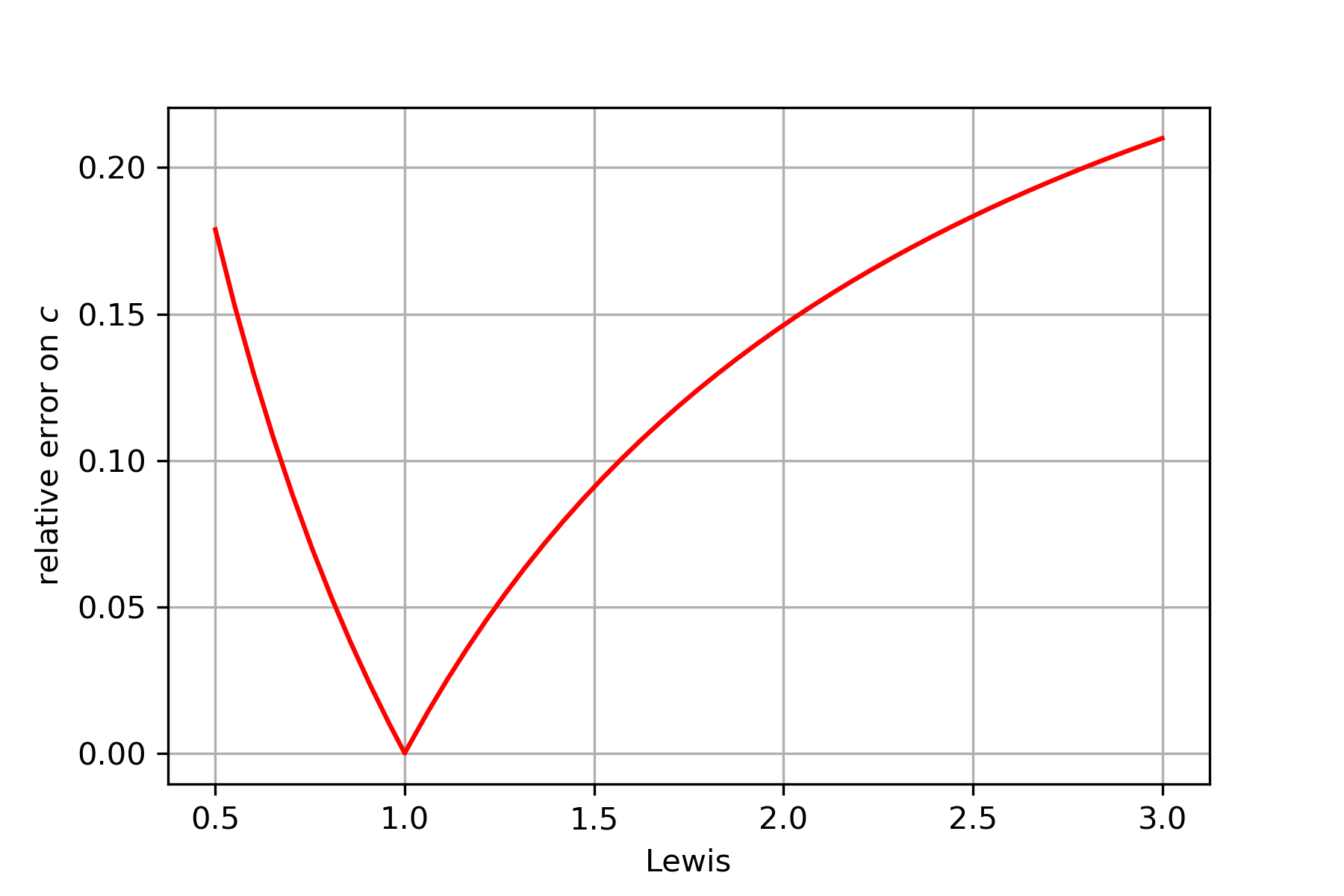}
  \captionof{figure}{Relative error on the prediction of $c$ by the semi-analytical method compared to the CFD method, as a function of the Lewis number}
  \label{fig:sweep_Lewis:error}
\end{minipage}
\end{figure}

\section{Conclusion}
We have presented a new method for  the determination of \modif{travelling wave solutions for} a simplified combustion model of a homogeneous solid propellant.
The main assumptions are that the gas phase only contains one reactant and one product,  the reactant being transformed into the product
by a single irreversible reaction, and that the Lewis number is 1.
 \modif{Considering solutions of this nonlinear eigenvalue problem in the form of a travelling wave profile as well as a regression velocity $c$}, 
we have derived a reduced system
which can be used in a numerical shooting method to determine the correct regression \modif{velocity}.
We have proven that the travelling wave solution profile and \modif{velocity} exist and are unique under conditions which are not restrictive
in view of the physical properties encountered in real solid propellants.

A numerical comparison has been conducted with a CFD code developed at \mbox{ONERA}, and the agreement is very good for a broad range of parameter values,
at least for a unitary Lewis. \modif{The shooting method is simpler to implement, more efficient and reliable than the CFD code}. We have shown that the relative error on $c$ grows as the Lewis number changes, but the \modif{semi-analytical} solution remains quantitatively correct for realistic values of this parameter. A comparison has also been made with some of the main analytical models, and we have shown that our semi-analytical model produces better results overall.
Practically, the semi-analytical method is free of any space discretisation error. The remaining sources of error are well controlled.
This method can thus be a useful verification tool for CFD codes with simplified test cases.
Besides, the method always converges, hence it can be used to generate initial solutions for more detailed methods that would otherwise struggle to converge.

This method can be employed to determine the various coefficients needed to compute the linear response function to pressure fluctuations,
by performing multiple simulations with slight variations of one parameter.

The proof of existence and uniqueness may be extended to include the effect of a constant external heat flux absorbed both at the surface and inside the solid. Care must be taken,
as the burnt gas temperature $T_f$ will depend on the mass flow rate.
Also, as in \cite{1963JohnsonNachbarUnicity}, a reverse reaction may be included in the gas phase, allowing for a non-trivial equilibrium far away from the surface.
These effects were not included in the present paper for readability reasons.

A few evolutions can be envisioned for the numerical shooting method.
More advanced pyrolysis laws can be used, as in \cite{brewster2004}.
Radiation penetration and absorption, as well as heat loss by \modif{thermal radiation} and potential heat loss to the surroundings may be easily integrated into the numerical tool.
This will require a numerical resolution of the solid phase temperature profile, as already done for the gas phase, instead of the simple analytical solution that we have been able to use in this paper.
\modif{The shooting method may also be adapted to take into account a temperature dependent Lewis number.}
\modif{Although not detailed in this paper, we have conducted a numerical experiment on the inclusion of non-linear surface phenomena. For example, following the work of Johnson and Nachbar \cite{deflagrationLimitJohnson}, a radiative heat loss on the propellant surface can be included. This heat loss is accounted for in $S$ and also brings a new dependence of $T_f$ on $c$. It is observed that $\xi$ is not a monotonous function any more: depending on the surface emissivity, there can be one solution, two, or none. When there are two solutions, only one is thought to be stable, but the shooting method is still able to find the potentially unstable one. This shows some potential for the semi-analytical tool.}
\modif{When the shooting method is extended to account for such phenomena, the existence and uniqueness of the solution might not be guaranteed any more, and the bounds on the regression velocity may need to be redefined. Finally, the dichotomy process may need to be performed on multiple separate intervals for the regression velocity to allow the determination of all solutions.}

We believe that \modif{the approach presented in this paper} also sets the proper framework for the stability analysis of the stationary wave profile;
this is out of the scope of the present paper but is the subject of future work. 

\section*{Acknowledgements}
The present research was conducted thanks to a Ph.D grant co-funded by DGA, Ministry of Defence (E. Faucher, Technical Advisor), and ONERA.
The author would like to thank V. Giovangigli for several helpful discussions,
\modif{and the reviewers for very useful remarks, which have helped improving the paper}.

\bibliographystyle{unsrt}
\bibliography{references}

\appendix
\section{Analysis with constant surface temperature}
\label{remark:casParticulierTsConstante}
Johnson and Nachbar \cite{1963JohnsonNachbarUnicity} proved the uniqueness of $c$ for any fixed value of $T_s$.
Using the same approach as in Propositions \ref{prop:existenceSolution}, \ref{prop:uniquenessQp_negatif}, \ref{prop:uniquenessQp_positif}, one may
prove this result by following the steps listed below.
 \begin{itemize}
  \item We replace the pyrolysis law \eqref{eq:modifiedPyrolysisLaw} by the trivial relation $T_s = T_{s,0}$. As a consequence, and 
 even without Assumption H\ref{assum:cpcs} ($c_p=c_s$), $\Qpyro$ is a constant, whence, from equation \eqref{eq:definition:Scpcs}, $S$ is a linear function of $c$,
 as it is the case in our previous study.
 
 \item  Before proving the existence of a solution, it is actually useful to show that the gas heat feedback $\gamma^+$ is a monotonous function of $c$.
 Similarly to what is done in the proof of Proposition \ref{prop:uniquenessQp_negatif}, we can write $\derivshort{\gamma^+}{c} = A > 0$.
 The term $B$ which appeared in the proof of Proposition \ref{prop:uniquenessQp_negatif} is strictly $0$ as $\derivshort{\theta_s}{c}=0$.
 Consequently, $\gamma^+$ is a monotonous function of $c$: as $c$ increases ($\massflux$ decreases), the gas heat feedback increases.
 The monotonicity of $\gamma^+$ will be helpful when discussing the existence of a solution.
 
 \item  Let us express the \nomXi{}:
 $$\xi(c) = \Delta\gamma(c) - S(c) = \gamma^+(c) - \eta\gamma^-(c) - S(c)$$
 The analytical solution of the solid phase temperature profile gives $\gamma^-(c) = -c \theta_s$.
 Using the definition of $S$ in equation \ref{eq:definition:Scpcs}, we can reformulate $\xi$ as:
 \begin{equation}
 	\label{annexJN:xi}
	\xi(c) = \gamma^+ + \eta c \underbrace{\left( \theta_s - \dfrac{\Qpyro}{c_s(T_f-T_0)} \right)}_{K} 
 \end{equation}

  Note that $K>0$, as we still retain the property $T_s \geq \Tslim$ (see proof of Proposition \ref{prop:uniquenessQp_positif}).
  
 \item To establish the existence of a solution wave \modif{velocity} $c$ such that $\xi(c)=0$,
 we rely on two limit cases, $c=0$ and $c=\cmax$, as we did in the proof of Proposition \ref{prop:existenceSolution}:
 
 \begin{itemize}
    \item The first limit case is $c=0$, and we easily obtain $\xi(0) = \Delta\gamma(0) = \left( 2 \int_{\theta_s}^{1} \Psi(y) dy \right)^{1/2} > 0$
    as in the proof Proposition \ref{prop:existenceSolution}.

    \item The second limit case is $c=\cmax$. In our previous study, $\cmax$ was a value obtained via the pyrolysis law for $T_s=T_f$.
    In the study of Johnson and Nachbar, $T_s$ does not depend on $c$ any more, therefore we use $\cmax=-\infty$.
    The monotonicity of $\gamma^+$ we have just established before leads to $\gamma^+(-\infty) < \gamma^+(0)$.
    Moreover, the monotonicity of the temperature profile also yields $\gamma^+(-\infty) \geq 0$.
    As $K > 0$, we deduce from \eqref{annexJN:xi} that $\xi(-\infty) = -\infty$.
\end{itemize}
 Overall, we have $\xi(0) > 0$ and $\xi(-\infty)=-\infty$, therefore we conclude that a solution wave \modif{velocity} $c$ exists, such that $\xi(c)=0$.
 
 \item To prove the uniqueness of the solution, we derive $\xi$ with respect to $c$:
  $$\derivshort{\xi}{c} = \derivshort{\gamma^+}{c} + \eta K $$
  We have shown that all the terms are positive, hence $\derivshort{\xi}{c} > 0$. We conclude that the solution is unique.
 \end{itemize}

\end{document}